\colorlet{darkblue}{blue!50!black}
\colorlet{darkblue}{blue!50!black}
\newcommand{\p}{\partial}
\newcommand{\e}{\varepsilon}
\newcommand{\D}{{\mathbb D}}
\newcommand{\R}{{\mathbb R}}
\newcommand{\Z}{{\mathbb Z}}
\newcommand{\T}{{\mathbb T}}
\newcommand{\ty}{\infty}
\newcommand{\de}{\delta}
\newcommand{\EE}{{\cal E}}
\newcommand{\GG}{{\cal G}}
\newcommand{\HH}{{\cal H}}
\newcommand{\KK}{{\cal K}}
\newcommand{\RR}{S}
\newcommand{\XX}{{\cal X}}
\newcommand{\lag}{\langle}
\newcommand{\rag}{\rangle}
\newcommand{\dd}{{\textup d}}
\newcommand{\lspan}{\mathop{\rm span}\nolimits}
\newcommand{\diver}{\mathop{\rm div}\nolimits}
\theoremstyle{plain}
\newtheorem*{mt}{Main Theorem}
\newtheorem*{lemma*}{Lemma}
\newtheorem{theorem}{Theorem}[section]
\newtheorem{proposition}[theorem]{Proposition}
\newtheorem{corollary}[theorem]{Corollary}
\newtheorem*{corollary*}{Corollary}
\theoremstyle{definition}
\newtheorem{definition}[theorem]{Definition}
\theoremstyle{remark}
\newtheorem{remark}[theorem]{Remark}
\numberwithin{equation}{section}
\begin{document}
\author{Vahagn~Nersesyan\footnote{Universit\'e Paris-Saclay, UVSQ, CNRS, Laboratoire de Math\'ematiques de Versailles, 78000, Versailles, France;  e-mail: \href{mailto:vahagn.nersesyan@uvsq.fr}{Vahagn.Nersesyan@uvsq.fr}}}
  \title{A   proof of approximate controllability of the 3D~Navier--Stokes system    via a linear test}
\date{\today}

\maketitle

 \begin{abstract}

 We consider  the   3D~Navier--Stokes system driven by an   additive finite-dimensional control force.~The purpose  of this paper is to show how  the   approximate controllability     of this system  can be derived from the approximate controllability of   the Euler system linearised around some suitable trajectory.~The  proof presented here is        shorter than   the previous~ones obtained by  Lie algebraic methods and gives some   new information about the structure of the control.~The dimension of the   control space   provided by this approach is   larger, but  it is still uniform with respect to~the~viscosity.

\smallskip
\noindent
{\bf AMS subject classifications:} 35Q30, 35Q31, 35Q35,  93B05,  93B18

\smallskip
\noindent
{\bf Keywords:}  Navier--Stokes system, linearised Euler system,   approximate controllability, return method, linear test, saturation property

\end{abstract}

 \tableofcontents
 
\setcounter{section}{-1}

\section{Introduction}
\label{S:0}

 In this paper, 
 we consider the 3D  Navier--Stokes (NS) system  for incompressible viscous fluids on the torus $\T^3=\R^3/2\pi\Z^3$:   \begin{equation} \label{0.1}
\p_t u-\nu\Delta u+\lag u,\nabla\rag u+\nabla p=f(t,x),\quad\diver u=0,  
\end{equation}
where $\nu>0$ is the viscosity of the fluid, $u = (u_1(t,x), u_2(t,x),u_3(t,x))$  and $p = p(t, x)$ are the  unknown velocity field and   pressure, and $f$ is an external~force. We   fix  any   $T>0$ and
   assume that the  force    is      of the~form 
   $$f(t,x)=h(t,x)+ \eta(t,x), \quad t\in J_T=[0,T], \quad x\in\T^3,$$ 
   where  $  h: J_T\times \T^3\to \R^3$ is a given smooth function   
and~$\eta$ is a control taking values in some   subspace $\HH \subset H^k(\T^3,\R^3)$, $k\ge 3$.~The subspace $\HH$  incorporates different constraints  that might be imposed on the control;   in the examples   considered in this paper,  it   gives the Fourier modes that are directly perturbed by the   control force. We are mostly interested here by the situation when $\HH$ is   a  finite-dimensional subspace     not depending~on~the~viscosity.

Projecting Eq.~\eqref{0.1} to the space~$H$ of divergence-free vector fields with zero mean~value (see~\eqref{0.9}) and assuming that $h(t)$ and $\eta(t)$ belong to $H$ for any~$t\in J_T$,  
  we~eliminate the pressure term from the problem and obtain an evolution equation for  the velocity field:
\begin{equation} \label{0.2}
\dot u+\nu L  u  +B(u)=h + \eta.  
\end{equation}This equation is supplemented with the initial condition
 \begin{equation}\label{0.3}
u(0)=u_0	.
\end{equation} 
 Recall that, for any~$u_0\in H^k=H^k(\T^3,\R^3)\cap H$, problem~\eqref{0.2},~\eqref{0.3} has a unique local-in-time strong solution (see Section~\ref{S:1.1}).

 In this introduction, we formulate a simplified version of   our main  result, assuming that   the   subspace $\HH$ is   given by
\begin{equation}\label{0.4}
\HH=\lspan\{l(\ell)\sin\lag \ell,x\rag, l(\ell)\cos\lag \ell,x\rag:~|\ell|\le 2,~\ell \in \Z_*^3\}, 
\end{equation}
where $\{l(\ell), l(-\ell)\}$ is   an arbitrary orthonormal basis in~$\{x \in  \R^3:  \lag \ell, x\rag= 0\}.$  
 \begin{mt}Eq.~\eqref{0.2} is approximately controllable in   small time by $\HH$-valued controls, i.e.,~for any initial condition $u_0\in H^{k+1}$, any target $u_1\in H^{k+1}$, and sufficiently small    $\delta>0$, there is a   control   $\eta_\delta\in  L^2(J_{T\delta},\HH)$ and a   strong solution~$u$ of   problem~\eqref{0.2},~\eqref{0.3}  defined on $J_{T\de}$ such that      
\begin{equation}\label{0.5}
u(T\de)\to u_1 \quad\text{in $H^k$ as $\de\to 0^+$}.
\end{equation}Moreover, the control $\eta_\delta$ can be chosen in the form 
\begin{equation}\label{0.6}
  \eta_\de=R_\de(u_0,u_1)+ \zeta_\delta,
\end{equation}
 where $R_\de: H^k\times H^k\to  L^2(J_{T\delta},\HH)$ is a   linear bounded operator  with a finite-dimensional range and    $\zeta_\delta\in    L^2(J_{T\delta},\HH)$, both   $R_\de$ and $\zeta_\delta$ do   not depend  on~$(u_0,u_1)$. Limit~\eqref{0.5} is
   uniform with respect to $u_0$ and $u_1$ in a
bounded set of $H^{k+1}$.  \end{mt} 
A more general version of this result is given in Section~\ref{S:2}.~In particular, we~define    there a    saturation property that implies small time approximate controllability for different  subspaces $\HH$ spanned by  eigenfunctions of the Stokes operator.~As~a consequence of the Main Theorem, we obtain the following   approximate controllability property in fixed~time. 
\begin{corollary*}Eq.~\eqref{0.2} is approximately controllable in time $T>0$ by $\HH$-valued controls, i.e., 
	 for any~$\e>0$ and any $u_0, u_1\in H^k$, there is a control   $\eta\in L^2(J_T,\HH)$ and a   strong solution~$u$   of Eq.~\eqref{0.2} defined on $J_T$ such that     
$$
 \|u(T)-u_1\|_{H^k}<\e. 
$$
\end{corollary*}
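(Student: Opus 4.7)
The plan is to upgrade the Main Theorem's small-time controllability to the fixed time $T$ by iterating it on short sub-intervals of $J_T$ along a smooth reference trajectory in $H^{k+1}$.

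By density of $H^{k+1}$ in $H^k$ and continuous dependence of \eqref{0.2} on initial data, it suffices to treat the case $u_0,u_1\in H^{k+1}$: pick $\tilde u_0,\tilde u_1\in H^{k+1}$ with $\|u_i-\tilde u_i\|_{H^k}<\e/4$ and steer $\tilde u_0$ to within $\e/2$ of $\tilde u_1$. Then fix a smooth reference curve $\bar u\in C^\infty(J_T,H^{k+1})$ bounded in $H^{k+1}$ with $\bar u(0)=\tilde u_0$ and $\bar u(T)=\tilde u_1$ (e.g.~the affine interpolation).

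Partition $J_T$ into $N$ equal subintervals $[t_j,t_{j+1}]$ of length $T/N$. Inductively build the controlled trajectory: at step $j$, apply the Main Theorem with small time $T/N$ (i.e.~$\de=1/N$) to obtain an $\HH$-valued control on $[t_j,t_{j+1}]$ steering the current state $u(t_j)$ to within $\eta_N$ of $\bar u(t_{j+1})$ in $H^k$-norm. The uniform convergence on bounded subsets of $H^{k+1}$ asserted in the Main Theorem gives $\eta_N\to 0$ as $N\to\infty$, and by choosing $\de$ slightly smaller than $1/N$ one can make the per-step error decay arbitrarily fast. Concatenating these sub-interval controls yields $\eta\in L^2(J_T,\HH)$ and a strong solution $u$ on $J_T$; a Gronwall-type estimate using the local Lipschitz dependence of the NS flow on initial data bounds the final discrepancy $\|u(T)-\tilde u_1\|_{H^k}$ by $C(T)\cdot N\eta_N$, which is less than $\e/2$ for $N$ large enough, so $\|u(T)-u_1\|_{H^k}<\e$.

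The principal technical obstacle is keeping the iterates $u(t_j)$ in a fixed bounded subset of $H^{k+1}$, so that the Main Theorem's uniform error estimate applies with a $j$-independent $\eta_N$. This is handled by exploiting the explicit structure of the control $\eta_\de=R_\de(u_0,u_1)+\zeta_\de$ provided by the Main Theorem (boundedness of the linear operator $R_\de$, and $\zeta_\de$ independent of the data) together with the parabolic smoothing of the Stokes operator: choosing an $H^{k+1}$-neighborhood of $\bar u$ of radius comparable to the $H^{k+1}$-norm of $\bar u$, one shows inductively that, provided $\eta_N$ is small enough, the iterates remain in this neighborhood, closing the argument.
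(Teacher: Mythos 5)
Your overall strategy --- reduce to $H^{k+1}$ data by density and continuous dependence, then iterate the small-time result to fill out the interval $J_T$ --- is in the right spirit, and the initial reduction matches the paper's ``simple approximation argument''. But the core of your scheme has a genuine gap: the inductive claim that the iterates $u(t_j)$ stay in a fixed bounded subset of $H^{k+1}$. The Main Theorem only guarantees that $u(t_{j+1})$ is close to the target in the $H^k$-norm; it gives no uniform $H^{k+1}$ bound at the endpoint of a controlled step. The mechanisms you invoke do not supply one: the control on each subinterval is of size $\delta^{-2}\zeta(\delta^{-1}t)+\delta^{-1}g(\delta^{-1}t)$ with $\delta\sim 1/N$, i.e.\ of order $N^2$, so any parabolic-smoothing estimate for $\|u(t_{j+1})\|_{k+1}$ degrades as $N\to\infty$ rather than stabilising; moreover the $H^{k+1}$-size of the linearised endpoint $v(T)$ depends on the approximate right inverse $R_\e$, whose norm is not under control as the accuracy improves. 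Since the per-step error $\eta_N$ furnished by the Main Theorem is uniform only over bounded sets of $H^{k+1}$, losing this bound at step $j$ destroys the uniformity needed at step $j+1$, and the induction does not close. (A secondary point: because each step re-targets $\bar u(t_{j+1})$ from the actual current state, errors do not accumulate as $N\eta_N$ through a Gronwall estimate --- only the last step's error survives --- so that part of the argument is a red herring; the real obstruction is the $H^{k+1}$ bound.)

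The paper's proof of Corollary~\ref{C:1.4} avoids this entirely by not following a moving reference path. It steers once, in a short time $T\delta$, to within any prescribed accuracy of $u_1$; then it exploits Proposition~\ref{P:1.1} to find $r\in(0,\e)$ and $\tau>0$ such that the \emph{uncontrolled} flow keeps every point of $B_{H^k}(u_1,r)$ inside $B_{H^k}(u_1,\e)$ for time $\tau$. One loiters with $\eta=0$ for time $\tau$, re-corrects with another short control pulse targeting the same point $u_1$ with the same fixed accuracy $r$, and repeats. Each loitering phase consumes at least $\tau$, so the number of corrections is bounded a priori by $T/\tau$, and no uniformity of the small-time result over the iterates is required --- each correction only needs the existence of \emph{some} admissible $\delta$ for its particular (already $H^k$-regular) starting point. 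If you want to salvage your scheme, you would have to insert free-evolution windows between the control pulses to regularise the state in $H^{k+1}$, at which point you are essentially reconstructing the paper's loiter-and-correct argument.
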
Roughly speaking, this result  is obtained by applying  the   Main Theorem on a~small time interval, then by forcing the trajectory to remain   near $u_1$  for  sufficiently~long~time.

The problem of   controllability of PDEs with  an additive finite-dimensional   force has been studied by many authors in the recent years.~Agrachev and Sarychev~\cite{AS-2005, AS-2006, AS-2008} were the first who considered this problem; they   established the approximate controllability of the~NS  and Euler systems on the 2D torus. Shirikyan generalised their approach to study      the     NS     system  on the 3D torus~\cite{shirikyan-cmp2006, shirikyan-aihp2007} and   the Burgers equation     on the real line~\cite{Shi-2013} and    on a bounded  interval with   Dirichlet boundary conditions~\cite{Shir-2018}. 
Rodrigues and Phan~\cite{SSRodrig-06,RD-2018} considered    the 2D and 3D NS systems  on    rectangles with   Lions boundary conditions. Compressible and incompressible 3D Euler systems were studied by Nersisyan~\cite{Hayk-2010,   Hayk-2011}, and the  2D~cubic Schr\"odinger equation by Sarychev~\cite{Sar-2012}.~More recently, the author considered the approximate controllability of      Lagrangian trajectories of the 3D~NS system~\cite{VN-2015} and parabolic PDEs with polynomially growing nonlinearities~\cite{nersesyan-2020}.~Boulvard et al.~\cite{BPN-2020} considered the 3D system of primitive equations   of meteorology and oceanology with control acting directly   only on the temperature equation.~The proofs of these papers are based on   infinite-dimensional  extensions of   Lie algebraic   methods. Most of them  provide    sharp results,   in the sense that they give necessary and sufficient conditions on the Fourier modes that should be perturbed by the control  in order to   ensure approximate~controllability.  

In this paper,  we take a different route.~We proceed by developing an   approach by  Coron~\cite{Cor-96}, who  considered the      approximate controllability of the  2D~NS system with   Navier slip boundary conditions and used  control forces that are localised in the physical space or on the  boundary.~That approach, called   return method, has been later used by~Coron~and Fursikov~\cite{CF-96} to study the global exact controllability to trajectories of the 2D~NS system on   manifolds without boundary,  by Coron and  Glass~\cite{Coron-1996, Glass-00}  to consider the global exact boundary controllability of  the 2D and 3D Euler systems, by
Fursikov and  Imanuilov~\cite{FI-99} to study the global exact controllability to trajectories of the~3D~Boussinesq system, and by many other authors.  We refer the reader to the   Chapter~6 of the book~\cite{coron2007} for a detailed   discussion of the     return method, for    applications to different control problems, and for more references.

The present paper is the first to extend this method to the case of  forces that are localised in the Fourier space.~The   configuration  we use here does not provide sharp results\footnote{A sharp version of Corollary is obtained in the papers~\cite{shirikyan-cmp2006, shirikyan-aihp2007, VN-2015}.   The results of these papers imply, in particular, the  approximate controllability in fixed time $T>0$ by controls taking values in the smaller subspace 
 $
\lspan\{l(\ell)\sin\lag \ell,x\rag, l(\ell)\cos\lag \ell,x\rag:\,|\ell|\le 1,\,\ell \in \Z_*^3\}; 
$ see~Remark~\ref{R:2.4} for more details.
 
}   in terms of the number of Fourier modes  directly perturbed by the control, but gives new and simpler proof with new information about the structure of the control.~Roughly speaking, the idea of the proof consists in   developing $u(t)$ as~follows:
 $$
 u(t )=\de^{-1}w(\de^{-1}t)+v(\de^{-1}t)+r_\de(t) \quad\text{for small~$\delta>0$},
 $$ where $w(t)$ is a suitable   solution of the Euler system (cf.~\eqref{1.5}) and~$v(t)$ is a solution of the Euler system linearised around $w(t)$ (cf.~\eqref{1.6}); both correspond to some controls taking values in the subspace~$\HH$ defined by~\eqref{0.4}.
  We~take~$w(t)$ in the   form
\begin{equation}\label{0.7}
 w(t)=\sum_{\ell \in \Z^3_*, |\ell|\le 1} \left(\psi^c_{\ell}(t)  l(\ell)\cos\lag \ell, x\rag+\psi^s_{\ell}(t)  l(\ell)\sin\lag \ell, x\rag\right),
\end{equation}where the functions $\{\psi^c_{\ell}, \psi^s_{\ell}\}\subset W^{1,2}(J_T,\R)$   are chosen such that the boundary conditions
\begin{equation}\label{0.8}
\psi^c_{\ell}(0)=\psi^c_{\ell}(T)=\psi^s_{\ell}(0)=\psi^s_{\ell}(T)=0	
\end{equation}are satisfied 
  and
 the derivatives $\{\dot\psi^c_{\ell}, \dot\psi^s_{\ell}\}$    form an observable family. Replacing the expression~\eqref{0.7} of the function~$w(t)$  into the left hand side of the Euler system, we infer   that~$w(t)$ is indeed a solution corresponding to some    $\HH$-valued control.~The observability property implies that the linearised Euler system   is approximately controllable by $\HH$-valued controls. Furthermore, choosing~$\eta$ in the form~\eqref{0.6}, we show that~$\sup_{t\in J_{T\de}}\|r_\de(t)\|_{H^k} \to 0$ as~$\delta\to 0$.~In view of \eqref{0.8}, this implies that $u(t)$ behaves like $v(t)$ at the endpoints~$0$ and~$T\delta$ as~$\delta\to 0$. Then the approximate controllability of the linearised Euler equation allows to conclude~\eqref{0.5}. The operator~$R_\delta$ in~\eqref{0.6} is an approximate right inverse of the resolving operator of the linearised Euler system and $\zeta_\de$ is explicitly given in terms of   the solution~$w$ and the corresponding control.

 The   proof of the Main Theorem is general enough and  can be applied to many other equations, such as the complex Ginzburg--Landau equation, the~Euler system, and  parabolic PDEs with polynomial nonlinearities.

This paper is organised as follows. In Section~\ref{S:1}, we formulate a perturbative result on solvability of  the 3D NS system and explain under what conditions its approximate controllability can be derived from that of the linearised Euler system. In Section~\ref{S:2}, we show that the conditions in Section~\ref{S:1} are satisfied when a saturation property holds for the set of controlled Fourier modes.  Finally, in
  Section~\ref{S:3}, we discuss the validity of the saturation property.

  \subsubsection*{Acknowledgement}

  This~research   was supported by the   ANR through  the grant NONSTOPS   ANR-17-CE40-0006-02.

  \subsection*{Notation} 
 
Here we collect some   notation used in this paper. 
 
\smallskip
\noindent
$\Z^3$ is   the integer lattice in~$\R^3$, $\Z^3_*=\Z^3\backslash\{0\}$, and  $\T^3$ is the   torus~$\R^3/2\pi\Z^3$.

\smallskip
\noindent
$L^p(\T^3,\R^3),$ $1\le p<\ty$  is the   Lebesgue space 
     endowed with the   norm~$\|\cdot\|_{L^p}$.  
 
\smallskip
\noindent
$H^k(\T^3,\R^3)$ is the Sobolev space of order $k\ge 1$ endowed with the scalar product $(\cdot,\cdot)_k$ and the corresponding   norm~$\|\cdot\|_k$.  

\smallskip
\noindent
$H^k= H^k(\T^3, \R^3)\cap H$, where  \begin{equation}\label{0.9}
H= \left\{u\in L^2(\T^3,\R^3):\,\, \diver u=0 \,\, \text{in $\T^3$,} \,\,\int_{\T^3} u(x) \dd x=0   \right\}.
\end{equation} $H$ is endowed with the $L^2$ scalar product $\lag \cdot, \cdot\rag$ and the corresponding norm $\|\cdot\|.$

\smallskip

Let  $X$ be a Banach  space   with the
norm $\|\cdot\|_X$. Then $B_X(a,R)$ denotes   the closed ball in $X$ of radius $R>0$ centred at $a\in X$.

\smallskip
\noindent
$C(J_T,X)$ is the space of continuous functions $u:J_T=[0,T]\to X$ endowed with the norm 
$$
\|u\|_{C(J_T,X)}=\max_{t\in J_T} \|u(t)\|_X.
$$

\smallskip
\noindent
$L^p(J_T,X),$  $1\leq p<\infty$   is      the
space of measurable functions $u: J_T\to  X$ with the~norm
\begin{equation}
\|u\|_{L^p(J_T,X)}=\bigg(\int_{0}^T \|u(t)\|_X^p\dd t
\bigg)^{1/p}.\nonumber
\end{equation}

\noindent
$L^p_{\text{loc}}(\R_+,X)$ is the space of measurable functions $u: \R_+ \rightarrow X$  whose restriction to   $J_T$ belongs to  $L^p(J_T,X)$ for any  $T>0$.

\smallskip
\noindent
$W^{m,p}(J_T,X)$, $m\ge1$ is the space of functions $u:J_T\to \R $ such that $ \frac{\dd^i }{\dd t^i} u\in L^p(J_T,X)$ for $0\le i\le m$.

\smallskip
\noindent
Throughout this paper, the same letter $C$ is used to denote  unessential positive constants   that may change from line to line.

\section{Linear test for approximate controllability}\label{S:1}

\subsection{Perturbative  result}\label{S:1.1}

Projecting the NS   system   to the space $H$, we  rewrite it  in the following equivalent~form without pressure term
 \begin{align} 
\dot u+\nu L  u  +B(u)&=f,  \label{1.1}\\
u(0)&=u_0, \label{1.2}
\end{align}where $L=- \Delta$ is the Stokes operator,   $B(u)=\Pi(\lag u,\nabla \rag u),$ and   $\Pi$ is the Leray   orthogonal projection  onto  $H$ in~$L^2$. In this section, we recall  a  perturbative result for   problem~\eqref{1.1},~\eqref{1.2}. Let us take  any integer~$k\ge3$ and define the~space      
$$
\XX_{T,k}=C(J_T, H^k)\cap L^2(J_T,H^{k+1}) 
$$
     endowed   with the~norm 
$$
\|u\|_{\XX_{T,k}}=\|u\|_{C(J_T,H^k)}+\|u\|_{L^2(J_T,H^{k+1})}.
$$
  \begin{proposition}\label{P:1.1} Let $\hat u_0\in H^k$ and   $\hat f\in L^2_{loc}(\R_+,H^{k-1})$.~There is a maximal time~$T_*=T_*(\hat u_0,   \hat f)>0$ and a unique solution $\hat u$ of problem~\eqref{1.1},~\eqref{1.2} with $u_0 =\hat u_0 $ and $ f=  \hat f$     whose restriction to the interval $J_T$  belongs to~$\XX_{T,k}$ for any~$T<T_*$.~If~$T_*<\ty$, then   
$\|\hat u(t)\|_k\to +\infty$ as  $t\to T_*^-$.
  For any~$T<T_*$,  there are numbers
 $\varkappa=\varkappa(T,\Lambda)>0$  and $C=C(T,\Lambda)>0$, where   
$$
\Lambda=  \|\hat u\|_{\XX_{T,k}}+\|\hat f \|_{L^2(J_T,H^{k-1})},
$$
such that  
\begin{itemize}
\item[(i)] for any $u_0\in H^k$   and $f\in L^2(J_T,H^{k-1})$  satisfying  
\begin{equation}\label{1.3}
\|u_0-\hat u_0\|_k+ \|f-\hat f\|_{L^2(J_T,H^{k-1})}<
\varkappa,
\end{equation}
 there is  a unique
solution $ u\in \XX_{T,k}$ of problem \eqref{1.1}, \eqref{1.2};
\item[(ii)] 
  let $S$ be the resolving operator of problem \eqref{1.1}, \eqref{1.2}, i.e., the mapping  taking   $(u_0,f )$ satisfying \eqref{1.3} to the solution~$u$.
 Then     
$$\|S(u_0,f)-S(\hat u_0,\hat f)\|_{\XX_{T,k}}\le 
C \left(\|u_0-\hat u_0\|_k+
\|f-\hat f\|_{L^2(J_T,H^{k-1})}\right).
$$
 \end{itemize}  
   Moreover, the problem  is  regularising   in the sense that, when $\hat f$~is~smooth,  the restriction of $S(\hat u_0,\hat f)$   to the interval $(0, T_*]$ belongs to $C^\ty((0,T_*]\times \T^3)$. 
\end{proposition}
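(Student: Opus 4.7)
The strategy is a linearisation-plus-fixed-point argument powered by standard parabolic energy estimates for the Stokes operator. First I would invoke (or reprove, via a Galerkin scheme) the local-in-time well-posedness of \eqref{1.1},~\eqref{1.2} in $\XX_{T,k}$ for any $u_0\in H^k$ and $f\in L^2_{\text{loc}}(\R_+,H^{k-1})$. Since $k\ge 3$, the space $H^k$ is a Banach algebra and one has the bilinear estimate
$$
\|B(u,w)\|_{k-1}\le C\bigl(\|u\|_k\|w\|_{k+1}+\|w\|_k\|u\|_{k+1}\bigr),
$$
which is enough to run a Picard iteration on a small time interval in $\XX_{T,k}$. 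This produces the maximal solution~$\hat u$ on $[0,T_*)$ together with the blow-up alternative: as long as $\|\hat u(t)\|_k$ stays bounded one can continue the solution, so finiteness of $T_*$ forces $\|\hat u(t)\|_k\to\infty$ as $t\to T_*^-$.

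For the perturbative statement, I set $v=u-\hat u$ and $g=f-\hat f$, so that $v$ satisfies
$$
\dot v+\nu L v+\hat{\LL} v+B(v)=g,\qquad v(0)=u_0-\hat u_0,
$$
where $\hat{\LL}v:=B(\hat u,v)+B(v,\hat u)$ is linear in~$v$ with coefficients controlled by~$\hat u\in\XX_{T,k}$. For $v$ in a small ball of $\XX_{T,k}$ I would solve the inhomogeneous linear problem
$$
\dot w+\nu Lw+\hat{\LL}w=g-B(v),\qquad w(0)=u_0-\hat u_0,
$$
by standard energy estimates: testing against $L^k w$, absorbing the viscous contribution $\nu\|w\|_{k+1}^2$, treating $\hat\LL w$ by the bilinear inequality and the bounds on $\hat u$, and applying Gr\"onwall, one obtains
$$
\|w\|_{\XX_{T,k}}\le C(T,\Lambda)\bigl(\|u_0-\hat u_0\|_k+\|g\|_{L^2(J_T,H^{k-1})}+\|B(v)\|_{L^2(J_T,H^{k-1})}\bigr).
$$
Since $\|B(v)\|_{L^2(J_T,H^{k-1})}\le C\|v\|_{C(J_T,H^k)}\|v\|_{L^2(J_T,H^{k+1})}\le C\|v\|_{\XX_{T,k}}^2$, the map $v\mapsto w$ is a contraction on a ball of radius of order $\varkappa$, provided $\varkappa$ is chosen small enough depending on $T$ and $\Lambda$. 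Its unique fixed point is the desired solution, which proves~(i). Part~(ii) then follows by writing the analogous equation for the difference of two such solutions, both already controlled in $\XX_{T,k}$, and rerunning the same energy estimate, this time without any smallness assumption.

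The main obstacle is not conceptual but technical: one has to close the estimate simultaneously in $C(J_T,H^k)$ and in $L^2(J_T,H^{k+1})$ on the \emph{whole} interval $J_T$, and not merely on a short time where contraction is automatic. The parabolic gain in the $L^2(J_T,H^{k+1})$-norm comes from testing the viscous term against $L^k w$, while the drift terms issued from $\hat{\LL}w$ are absorbed using the a priori bound $\hat u\in C(J_T,H^k)\cap L^2(J_T,H^{k+1})$ together with Cauchy--Schwarz in time. It is in this step that the constants $\varkappa$ and $C$ acquire their dependence on $T$ and on $\Lambda=\|\hat u\|_{\XX_{T,k}}+\|\hat f\|_{L^2(J_T,H^{k-1})}$; any of these blowing up as $T\nearrow T_*$ is harmless, since the statement is required only for each fixed $T<T_*$.
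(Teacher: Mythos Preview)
Your proposal is correct and follows the standard route. Note, however, that the paper does not actually prove Proposition~\ref{P:1.1}: it simply cites Chapter~17 of Taylor's book for local existence and uniqueness, and Theorem~1.3 of~\cite{VN-2015} for properties~(i) and~(ii), remarking that the arguments are standard. Your outline---Galerkin/Picard for local well-posedness, the blow-up alternative from the continuation criterion, and a contraction argument on the difference $v=u-\hat u$ driven by the energy estimate obtained by testing against $L^k w$---is precisely the kind of standard argument those references carry out, so there is nothing to compare beyond noting that you have sketched what the paper defers to the literature.
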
See, for exemple,    Chapter~17 in~\cite{taylor1996} for   the  proof  of
  local  well-posedness  and   regularising property.~The properties (i) and~(ii), under these regularity assumptions, are proved in Theorem~1.3 in~\cite{VN-2015}, using   standard arguments.

In what follows, we fix any time $T>0$, any integer~$k\ge 3$, and any function $h\in L^2(J_T,H^{k-1})$, and assume that $f=h+\eta$. Let us~denote by~$\Theta(u_0,h, T)$ 
 the set of functions $\eta\in L^2(J_T,H^{k-1})$   such that problem~\eqref{1.1},~\eqref{1.2}  has
a solution $u\in \XX_{T,k}$.~In~view of   Proposition~\ref{P:1.1}, the set
  $\Theta(u_0,h, T)$ is   open in~$L^2(J_T,H^{k-1})$.~We denote by~$S_t(u_0,h+\eta)$ the   
   restriction  of the solution  at time $t<T_*(u_0,    h+\eta)$. 

\subsection{Formulation and proof}\label{S:1.2}
  
By developing   the arguments of~\cite{Cor-96}, we show in this section how    the   approximate controllability of the NS~system 
\begin{equation} \label{1.4}
\dot u+\nu L  u  +B(u)=h + \eta
\end{equation}can be derived from the approximate controllability of the linearised Euler system.
More precisely,  we consider the Euler system 
 \begin{equation}\label{1.5}
\dot w  + B(w)=\zeta, 
\end{equation}and   its linearisation   
 \begin{equation}\label{1.6}
\dot v +  Q(v,w)=g, 
\end{equation} where 
\begin{equation}\label{1.7}
	Q(v,w)=B(v,w)+B(w,v), \quad   B(v,w)=\Pi(\lag v,\nabla \rag w). 
\end{equation}
The functions $\eta, \zeta,$ and $g$ are considered as controls taking values in the same (finite or infinite-dimensional) subspace~$\HH$  of $H^{k+1}$.
We will use the following two conditions.
\begin{itemize}
\item[\hypertarget{C1}{(C$_1$)}] 
{\sl There is a function $\zeta \in L^2 (J_T, \HH)$ and a solution 
$
w\in   C(J_T, H^{k+2})\cap W^{1,2}(J_T,H^{k+1})
$ of Eq.~\eqref{1.5} such that}
\begin{gather}
	w(0)= w(T)=0, \label{1.8}\\
	 Lw(t)\in \HH\quad \text{for   $t\in J_T$}. \label{1.9}
\end{gather}

\item[\hypertarget{C2}{(C$_2$)}] 
{\sl The linear Eq.~\eqref{1.6}, with a reference  trajectory   $w$  as in  Condition~{\rm(\hyperlink{C1}{\rm C$_1$})}, is approximately controllable in time $T>0$, i.e., for any~$\e>0$ and   any~$v_1\in H^{k+1}$, there is   a control $g\in L^2(J_T,\HH)$ such that the   solution
$
v\in C(J_T, H^{k+1})\cap W^{1,2}(J_T,H^k)
$ of Eq.~\eqref{1.6} with initial condition $v(0)=0$ satisfies
$$
 \|v(T)-v_1\|_{k+1}<\e. 
$$
}
\end{itemize}
  \begin{proposition}\label{P:1.2}
Let $\HH$ be a   subspace of $H^{k+1}$ such that  Condition~{\rm(\hyperlink{C1}{\rm C$_1$})} is satisfied.
 Then for any $u_0\in H^{k+1}$,   any $g\in L^2(J_T,\HH)$,  
 and sufficiently small $\delta>0$, there is a   control   $\eta_\delta\in \Theta(u_0,h, T\delta) \cap  L^2(J_{T\delta},\HH)$ such that    
\begin{equation}\label{1.10}
S_{T\delta}(u_0,h+\eta_\delta)\to v(T) \quad\text{in $H^k$ as $\de\to 0^+$},
\end{equation}where $v\in C(J_T, H^{k+1})\cap W^{1,2}(J_T,H^{k})$ is the solution of Eq.~\eqref{1.6} with initial condition $v(0)=u_0$.
 Moreover,    $\eta_\de$ is given explicitly by   
\begin{equation}\label{1.11}
  \eta_\de=  \delta^{-1} g(\delta^{-1}t)+ \delta^{-2} \zeta(\delta^{-1}t)+\nu  \delta^{-1}L w(\delta^{-1}t), \quad t\in J_{T\delta},
\end{equation}and  limit~\eqref{1.10} is uniform with respect to $u_0$ in a
bounded set of $H^{k+1}$.  
 \end{proposition}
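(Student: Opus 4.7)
The strategy, inspired by Coron's return method, is to rescale the short-time problem to the fixed interval $J_T$ and to expand its solution around a singular ansatz. Setting $\tau = t/\delta$ and $U(\tau) := \delta\, u(\delta\tau)$, the Navier--Stokes equation~\eqref{1.4} with control $\eta_\delta$ given by~\eqref{1.11} becomes
\begin{equation*}
\dot U + \delta\nu L U + B(U) = \zeta(\tau) + \delta\, g(\tau) + \delta\nu L w(\tau) + \delta^2 h(\delta\tau), \qquad U(0) = \delta\, u_0,
\end{equation*}
because the $\delta^{-2}$ and $\delta^{-1}$ singular parts of $\eta_\delta$ are exactly those produced by a time-rescaling of $w$ and $v$. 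The ansatz $\widehat U(\tau) := w(\tau) + \delta\, v(\tau)$, with $v \in C(J_T, H^{k+1})$ the solution of~\eqref{1.6} starting from $v(0) = u_0$, matches the initial condition (thanks to $w(0) = 0$) and, using $\dot w + B(w) = \zeta$, $\dot v + Q(v,w) = g$, and $B(w + \delta v) = B(w) + \delta Q(w, v) + \delta^2 B(v)$, solves the rescaled equation up to the explicit remainder
\begin{equation*}
F_\delta(\tau) := \delta^2\bigl[h(\delta\tau) - \nu L v(\tau) - B(v)(\tau)\bigr].
\end{equation*}
A change of variables gives $\|F_\delta\|_{L^2(J_T, H^{k-1})}^2 \le C\delta^4 + \delta^3 \int_0^{T\delta}\|h(t)\|_{k-1}^2\,dt = o(\delta^3)$, the last integral tending to zero on the shrinking interval.

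The next step is to analyse the remainder $R_\delta := U - \widehat U$, which satisfies
\begin{equation*}
\dot R_\delta + \delta\nu L R_\delta + Q(\widehat U, R_\delta) + B(R_\delta) = F_\delta, \qquad R_\delta(0) = 0.
\end{equation*}
Since Condition~(C$_1$) gives $w \in C(J_T, H^{k+2})$ and linear theory for~\eqref{1.6} gives $v \in C(J_T, H^{k+1})$, the norm $\|\widehat U\|_{C(J_T, H^{k+1})}$ is bounded uniformly in $\delta \in (0, 1]$. I would close an $H^k$ energy estimate by combining the standard trilinear bounds $|(Q(\widehat U, r), r)_k| \le C\|\widehat U\|_{k+1}\|r\|_k^2$ and $|(B(r), r)_k| \le C\|r\|_k^3$ with Young's inequality $(F_\delta, r)_k \le \tfrac{\delta\nu}{2}\|r\|_{k+1}^2 + \tfrac{1}{2\delta\nu}\|F_\delta\|_{k-1}^2$, using the dissipation to absorb the low-regularity forcing. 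Gr\"onwall's inequality, together with a short bootstrap controlling the cubic nonlinearity via smallness of $R_\delta$, then yields
\begin{equation*}
\|R_\delta\|_{C(J_T, H^k)}^2 \le \frac{C}{\delta\nu}\|F_\delta\|_{L^2(J_T, H^{k-1})}^2 = o(\delta^2),
\end{equation*}
which simultaneously proves existence of $R_\delta$, hence of $U$ and of $u(t) = \delta^{-1} U(t/\delta)$ on $J_{T\delta}$; in particular $\eta_\delta \in \Theta(u_0, h, T\delta)$.

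Unscaling at $\tau = T$ and using $w(T) = 0$ from Condition~(C$_1$),
\begin{equation*}
u(T\delta) = \delta^{-1} U(T) = v(T) + \delta^{-1} R_\delta(T),
\end{equation*}
and $\delta^{-1}\|R_\delta(T)\|_k = o(1)$ gives~\eqref{1.10}. Uniformity in $u_0$ on bounded subsets of $H^{k+1}$ is automatic, since $v$ --- hence $\widehat U$ and all constants entering the Gr\"onwall estimate --- depends continuously on $u_0$ through the linear equation~\eqref{1.6}. The principal difficulty I anticipate is keeping the constants uniform in $\delta$ despite the vanishing effective viscosity $\delta\nu$: the cubic $B(R_\delta)$ must be closed by bootstrap on the smallness of $R_\delta$ rather than by dissipation, while the dissipation must still be used sharply in Young's inequality so that the factor $(\delta\nu)^{-1}$ is beaten by the $o(\delta^3)$ smallness of $\|F_\delta\|^2_{L^2(J_T, H^{k-1})}$ --- itself resting on $w(0) = 0$ and on the shrinking time-integral of~$h$.
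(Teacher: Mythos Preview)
Your proposal is correct and essentially identical to the paper's proof: your rescaled remainder $R_\delta(\tau)=U(\tau)-w(\tau)-\delta v(\tau)$ is precisely $\delta\,r(\delta\tau)$, where $r(t)=u(t)-v(\delta^{-1}t)-\delta^{-1}w(\delta^{-1}t)$ is the paper's remainder on $J_{T\delta}$, and the $H^k$ energy estimate, the Young-with-dissipation handling of the forcing $F_\delta$ (the paper's $\xi_\delta$), and the nonlinear Gr\"onwall/bootstrap closure all match term by term. The only cosmetic difference is that the paper bounds the self-interaction via $\|B(r)\|_{k-1}\|r\|_{k+1}\le C\|r\|_k^4+\tfrac{\nu}{4}\|r\|_{k+1}^2$ and then runs a quartic comparison ODE, whereas you use the direct trilinear bound $|(B(r),r)_k|\le C\|r\|_k^3$; both close the same way.
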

\begin{proof}  {\it Step~1.~Preliminaries.} Let us take any $M>0$, any $u_0\in B_{H^{k+1}}(0,M)$, and any
  $\eta\in L^2_{loc}(\R_+,\HH)$ and denote by~$u(t)=S_t(u_0,h+\eta)$, $t<T_*=T_*(u_0, h+\eta)$ the solution of problem~\eqref{1.4},~\eqref{1.2}. Following~\cite{Cor-96}, we   make a time substitution and consider the~functions
   \begin{gather}
   v_\de(t) =v(\delta^{-1}t),\quad   g_\de(t) =\delta^{-1} g(\delta^{-1}t),\nonumber\\
   w_\de(t) =\delta^{-1} w(\delta^{-1}t),\quad   \zeta_\de(t) =\delta^{-2} \zeta(\delta^{-1}t),\nonumber\\
   	r(t)=u(t)-v_\de(t)-w_\de(t), \quad t < \tilde T^\delta =\min\{T\delta, T_*\}.\label{1.12}
   \end{gather}   
  Assume that we have found a control $\eta=\eta_\delta\in L^2(J_T,\HH)$ such that
   \begin{equation}\label{1.13}
   T\delta< 	T_*^\delta =T_*(u_0, h+\eta_\de) \quad\text{for small $\delta>0$}.
   \end{equation}Then, in view of the equalities \eqref{1.8}, \eqref{1.12}, and $v(0)=u_0$, we   have
\begin{equation}\label{1.14}
r(0)=0, \quad   r(T\de)=u(T\de)-v(T).
\end{equation}
Thus, we  need to choose $\eta_\de\in L^2(J_T,\HH)$ such that, in addition to \eqref{1.13}, also the following limit holds 
\begin{equation}\label{1.15}
	\|r(T\de)\|_k\to 0 \quad \text{as $\delta\to 0^+$}, 
\end{equation} uniformly with respect to~$u_0\in B_{H^{k+1}}(0,M)$.
This will imply limit \eqref{1.10}.

  {\it Step~2.~Proof of \eqref{1.13} and \eqref{1.15}.} The functions $v_\delta(t)$ and $w_\delta(t)$, $t\in J_T$  satisfy the equations
  \begin{align*}
 \dot v_\de +Q(v_\de,w_\de)&=g_\delta,  \\    
 \dot w_\de +Q(w_\de)&=\zeta_\delta.
  \end{align*}This implies that~$r$ is a solution of the~equation 
\begin{equation}\label{1.16}
\dot r+ \nu  Lr+ B(r,r+v_\de+w_\delta)+	 B(v_\de+w_\delta, r)=  \xi_\de, \quad t<\tilde T^\delta,
\end{equation}where 
$$
\xi_\de= h+ \eta_\de- \nu  Lv_\delta-\nu  Lw_\delta-B(v_\de)-g_\de-\zeta_\de.
$$
Choosing  $\eta_\delta=g_\de+ \zeta_\de+\nu  Lw_\delta\in L^2(J_{T\delta},\HH)$ (cf. \eqref{1.11}),  we get
\begin{equation}\label{1.17}
\xi_\de= h-  \nu Lv_\delta-B(v_\de).
\end{equation}
Taking  the scalar product in $H$ of Eq.~\eqref{1.16} with $L^k r$, integrating by parts, then integrating in time, and using the first equality in~\eqref{1.14}, we obtain
\begin{align}\label{1.18}
\frac12  \|r\|_k^2+ \nu \int_0^t\|r\|_{k+1}^2 \dd s&=	\int_0^t \lag \xi_\delta, L^k r\rag\, \dd s-\int_0^t \lag   B(r,r+v_\de+w_\delta),L^k r\rag \,\dd s\nonumber\\&\quad-\int_0^t	 \lag B(v_\de+w_\delta, r), L^k r\rag\, \dd s=I_1+I_2+I_3.
\end{align}To estimate $I_1$,
we integrate by parts  and use     \eqref{1.17}  and  the inequalities of Cauchy--Schwarz and Young: 
\begin{align*}
|I_1|&\le \int_0^t \|\xi_\de\|_{k-1} \|r\|_{k+1}	\,\dd s\\&\le  C\int_0^t \left( \|h\|_{k-1}^2+ \|\nu L v_\de\|_{k-1}^2+ \|B(v_\de)\|_{k-1}^2\right) \dd s+\frac{\nu}{4}\int_0^t  \|r\|_{k+1}^2  \dd s. 
\end{align*}
   By a change of variable,   we have
\begin{align*}
	 \int_0^t  \|h\|_{k-1}^2 \, \dd s&\le \int_0^{T\de}  \|h\|_{k-1}^2  \,\dd s, \\
	  \int_0^t  \| L v_\de\|_{k-1}^2 \, \dd s&\le \de \int_0^T \|v\|_{k+1}^2 \,\dd s,\\
	  \int_0^t   \|B(v_\de)\|_{k-1}^2 \,\dd s &\le \delta \int_0^T   \|B(v)\|_{k-1}^2\, \dd s\le C\delta \int_0^T   \|v\|_k^2\, \dd s, \quad t\in J_{T\de}.
\end{align*}Thus,   there is $\e_\delta=\e_\de(M)>0$  not depending   on $t\in J_{T\de}$ and $u_0\in B_{H^{k+1}}(0,M)$   such that~$\e_\de \to 0$ as~$\delta\to 0^+$ and
\begin{equation}\label{1.19}
|I_1|\le \e_\de +	\frac{\nu}{4}\int_0^t  \|r\|_{k+1}^2 \, \dd s, \quad t< \tilde T^\de.
\end{equation}  
 We estimate $I_2$ and $I_3$ as follows:
\begin{align*}
|I_2|&\le \int_0^t |\lag   B(r,r+v_\de+w_\delta),L^k r\rag |	\,\dd s\\&\le  C\int_0^t \left( \|B(r)\|_{k-1}^2+ |\lag   B(r, v_\de+w_\delta),L^k r\rag | \right) \dd s+\frac{\nu}{4}\int_0^t  \|r\|_{k+1}^2  \dd s,
\\ |I_3|&\le \int_0^t |\lag   B(v_\de+w_\delta,r),L^k r\rag | 	\,\dd s.
\end{align*}
Note that  
 \begin{align*}
  \int_0^t  \|B(r)\|_{k-1}^2  \,\dd s &\le C \int_0^t  \|r\|_k^4 \, \dd s,\\
  \int_0^t   |\lag   B(r, v_\de+w_\delta),L^k r\rag |  \,\dd s &\le  C\int_0^t \left(\|v_\de\|_{k+1} +\|w_\de\|_{k+1} \right)   \| r\|_{k}^2 \, \dd s,\\
  \int_0^t   |\lag   B( v_\de+w_\delta, r),L^k r\rag | \,\dd s &\le  C\int_0^t \left(\|v_\de\|_k +\|w_\de\|_k \right)   \| r\|_{k}^2 \, \dd s,
 \end{align*}
 where we used the inequalities (see \cite{CF1988})
 \begin{align*}
  	|\lag   B(a, b),L^k b\rag |&\le C \|a\|_k \|b\|_k^2,\\
 	|\lag   B(a, b),L^k c\rag |&\le C  \|a\|_k \|b\|_{k+1}\|c\|_k, \quad  a,b \in H^k,\,  c\in H^{k+1}.
 \end{align*} Thus
\begin{align*}
 |I_2+I_3|&\le C\int_0^t \left(\|v_\de\|_{k+1} +\|w_\de\|_{k+1} \right)   \| r\|_{k}^2 \, \dd s\\&\quad+C \int_0^t  \|r\|_k^4 \, \dd s
+	\frac{\nu}{4}\int_0^t  \|r\|_{k+1}^2 \, \dd s. 
\end{align*}
Combining this with  \eqref{1.18} and \eqref{1.19}, we obtain  
 $$
  \|r\|_k^2 \le   \e_\de  + C\int_0^t \left(\|v_\de\|_{k+1} +\|w_\de\|_{k+1}  \right)   \| r\|_{k}^2 \, \dd s+C \int_0^t  \|r\|_k^4 \, \dd s, \quad t<\tilde T^\delta.
 $$By the Gronwall inequality,
 $$
  \|r\|_k^2 \le \left(\e_\de +C \int_0^t  \|r\|_k^4 \, \dd s\right)  \exp\left(C\int_0^t (\|v_\de\|_{k+1} +\|w_\de\|_{k+1} )   \, \dd s \right).  $$
	 For  $\de\le \delta_0(M)$ and $t\in J_{T\delta}$, we have
	$$
	\int_0^t (\|v_\de\|_{k+1} +\|w_\de\|_{k+1} )     \, \dd s=\int_0^{t\de} (\de\|v\|_{k+1} +\|w\|_{k+1} )   \, \dd s\le 1.
	$$Thus
	\begin{equation}\label{1.20}
	 \|r\|_k^2 \le  \e_\de +C \int_0^t  \|r\|_k^4 \, \dd s, \quad t<\tilde T^\delta,
	\end{equation}where  $C=C(M)>0$ does not depend on $t$, $\delta$, and $u_0$, and by the same letter~$\e_\delta$ we denote $e^C \e_\delta$. 
	Let us set
	$$
\Phi(t)=  \e_\de +C \int_0^t \|r\|_k^4\, \dd s . 
$$Inequality   \eqref{1.20} implies that 
$
(\dot \Phi)^{1/2}\le C\,\Phi, 
$ which is equivalent to
 $
 \dot \Phi/\Phi^2\le C.
$ Integrating the latter, we obtain
$$
\Phi(t)\le \e_\delta  (1-  C\e_\delta t)^{-1}, \quad t<\tilde T^\delta.
$$
Choosing $\delta$ sufficiently small, we see that 
$$\Phi(t)\le  2 \e_\delta<1 , \quad t<\tilde T^\delta.
$$This implies both assertions~\eqref{1.13} and \eqref{1.15} and completes the proof of the~proposition.
 \end{proof}
The following is the main result of this section.
  \begin{theorem}\label{T:1.3}
  Let $\HH$ be a   subspace of $H^{k+1}$ such that  Conditions~{\rm(\hyperlink{C1}{\rm C$_1$})} and~{\rm(\hyperlink{C2}{\rm C$_2$})} are satisfied.~Then Eq.~\eqref{1.4} is approximately controllable in   small time by $\HH$-valued controls, i.e.,~for any   $u_0, u_1\in H^{k+1}$   and sufficiently small    $\delta>0$, there is a   control   $\eta_\delta\in \Theta(u_0,h,T\de) \cap L^2(J_{T\delta},\HH)$ such that \begin{equation}\label{1.21}
S_{T\delta}(u_0,h+\eta_\de)\to u_1 \quad\text{in $H^k$ as $\de\to 0^+$}.
\end{equation}
Moreover, the control $\eta_\delta$ can be chosen in the form 
\begin{equation}\label{1.22}
  \eta_\de=R_\de(u_0,u_1)+ \zeta_\delta,
\end{equation}
 where $R_\de: H^k\times H^k\to  L^2(J_{T\delta},\HH)$ is a   linear bounded operator   with a finite-dimensional range and    $\zeta_\delta\in    L^2(J_{T\delta},\HH)$, both   $R_\de$ and $\zeta_\delta$ do   not depend  on~$(u_0,u_1)$. Limit~\eqref{1.21} is
   uniform with respect to $u_0$ and $u_1$ in a
bounded set of $H^{k+1}$.
 \end{theorem}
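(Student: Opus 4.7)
The plan is to apply Proposition~\ref{P:1.2} after reducing the approximate controllability of Eq.~\eqref{1.4} to a linear approximate controllability problem for the linearised Euler system~\eqref{1.6} that depends linearly and in a finite-dimensional way on the data. Denote by $\Phi_0 : H^{k+1} \to H^{k+1}$ the linear bounded resolving operator at time $T$ of~\eqref{1.6} with zero control, and by $\Psi : L^2(J_T,\HH) \to H^{k+1}$ the linear bounded resolving operator at time $T$ with zero initial data. By linearity, the solution of~\eqref{1.6} at time $T$ with $v(0) = u_0$ and control $g$ equals $\Phi_0(u_0) + \Psi(g)$, and Condition~{\rm(\hyperlink{C2}{\rm C$_2$})} is precisely the statement that $\Psi$ has dense range in $H^{k+1}$.

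The first step is to upgrade this density into a quantitative linear approximate right inverse of $\Psi$ with finite-dimensional range. Let $\{e_i\}_{i\ge 1}$ be the eigenfunctions of the Stokes operator $L$, normalised so that they form an orthonormal basis of $H^{k+1}$; set $E_n = \lspan\{e_1,\dots,e_n\}$ and let $P_n$ be the orthogonal projection onto $E_n$. For each $n\ge 1$ and each $1\le i\le n$, Condition~{\rm(\hyperlink{C2}{\rm C$_2$})} applied with target $e_i$ and tolerance $n^{-3/2}$ yields $g_i^{(n)}\in L^2(J_T,\HH)$ with $\|\Psi(g_i^{(n)}) - e_i\|_{k+1}\le n^{-3/2}$. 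Define the linear map $\mathcal R_n:E_n\to L^2(J_T,\HH)$ by $\mathcal R_n\bigl(\sum_{i=1}^n c_i e_i\bigr) = \sum_{i=1}^n c_i g_i^{(n)}$; a Cauchy--Schwarz estimate gives $\|\Psi\mathcal R_n(e) - e\|_{k+1}\le n^{-1}\|e\|_{k+1}$ for all $e\in E_n$. Setting $G_n(u_0,u_1) = \mathcal R_n\bigl(P_n(u_1-\Phi_0(u_0))\bigr)$, we obtain a linear bounded map $H^k\times H^k\to L^2(J_T,\HH)$ with range of dimension at most $n$, satisfying, with $W = u_1-\Phi_0(u_0)$,
\begin{equation*}
\|\Phi_0(u_0) + \Psi G_n(u_0,u_1) - u_1\|_k \le n^{-1}\|P_n W\|_{k+1} + \|(I-P_n)W\|_k.
\end{equation*}
The right-hand side tends to $0$ as $n\to\infty$, uniformly for $(u_0,u_1)$ in any bounded set of $H^{k+1}$, since $\|(I-P_n)W\|_k\le\lambda_n^{-1/2}\|W\|_{k+1}$ and $\Phi_0$ is bounded on $H^{k+1}$.

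It remains to combine the two approximations via a diagonal argument in $n$ and $\delta$. Define $\zeta_\delta(t) = \delta^{-2}\zeta(\delta^{-1}t) + \nu\delta^{-1}Lw(\delta^{-1}t)\in L^2(J_{T\delta},\HH)$, which is independent of $(u_0,u_1)$ by Condition~{\rm(\hyperlink{C1}{\rm C$_1$})}. For each fixed $n$ and each $M>0$, the set $\{G_n(u_0,u_1):(u_0,u_1)\in B_{H^{k+1}}(0,M)^2\}$ is bounded in $L^2(J_T,\HH)$, so Proposition~\ref{P:1.2} applied with $g = G_n(u_0,u_1)$ produces the control $\eta_\delta^{(n)}(u_0,u_1)(t) = \delta^{-1}G_n(u_0,u_1)(\delta^{-1}t) + \zeta_\delta(t)$ in $\Theta(u_0,h,T\delta)\cap L^2(J_{T\delta},\HH)$ and yields
\begin{equation*}
\sup_{(u_0,u_1)\in B_{H^{k+1}}(0,n)^2}\|S_{T\delta}(u_0,h+\eta_\delta^{(n)}(u_0,u_1))-\Phi_0(u_0)-\Psi G_n(u_0,u_1)\|_k\to 0
\end{equation*}
as $\delta\to 0^+$. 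Pick a decreasing sequence $\delta_n\to 0$ such that this supremum is at most $1/n$ for $\delta\le\delta_n$, set $n(\delta) = n$ for $\delta\in(\delta_{n+1},\delta_n]$, and define $R_\delta(u_0,u_1)(t) = \delta^{-1}G_{n(\delta)}(u_0,u_1)(\delta^{-1}t)$. Then $R_\delta$ is linear bounded with finite-dimensional range, $\zeta_\delta$ is independent of $(u_0,u_1)$, and the triangle inequality combining the two displays above yields~\eqref{1.21} uniformly on any bounded set of $H^{k+1}$. The main technical point is the quantitative construction of the linear right inverse $\mathcal R_n$ with controlled range dimension; the diagonal choice $n = n(\delta)$ is what glues together the qualitative density from~{\rm(\hyperlink{C2}{\rm C$_2$})} and the convergence from Proposition~\ref{P:1.2} into a single uniform limit as $\delta\to 0^+$.
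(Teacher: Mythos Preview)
Your proof is correct and follows the same strategy as the paper: build a linear approximate right inverse for the time-$T$ map of Eq.~\eqref{1.6} (the paper cites Proposition~2.6 of~\cite{KNS-2018} for this, whereas you construct it explicitly via the Stokes eigenbasis and spectral projections), then feed its output as $g$ into Proposition~\ref{P:1.2} and link the approximation parameter to $\delta$ by a diagonal choice---a step the paper compresses into the single phrase ``combining this with Propositions~\ref{P:1.1} and~\ref{P:1.2}''. The only point worth flagging is that the uniform convergence you extract from Proposition~\ref{P:1.2} over $(u_0,u_1)\in B_{H^{k+1}}(0,n)^2$ requires that proposition to hold uniformly in $g$ over bounded subsets of $L^2(J_T,\HH)$; this is indeed true from its proof (all constants depend only on $\|v\|_{C(J_T,H^{k+1})}$, which is bounded in terms of $\|u_0\|_{k+1}$ and $\|g\|_{L^2(J_T,H^{k+1})}$), but is not part of its stated conclusion.
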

\begin{proof} Let us denote by
$$
A:H^{k+1}\times L^2(J_T,\HH)\to C(J_T, H^{k+1})\cap W^{1,2}(J_T,H^k), \quad (v_0,g)\mapsto v 
$$  the resolving operator of Eq.~\eqref{1.6} with the initial condition  $v(0)=v_0$, and let~$A_t$ be its  restriction at time~$t$.~By~Condition~{\rm(\hyperlink{C2}{\rm C$_2$})}, the image of the mapping
$$
A_T(0,\cdot):L^2(J_T,\HH)\to H^{k+1}
$$ is dense in $H^{k+1}$.~Hence,  we can construct an approximate right inverse for~$A_T(0,\cdot)$. More precisely, by 
Proposition~2.6~in~\cite{KNS-2018}, for any~$\e>0$, there is a linear bounded operator~$R_\e: H^k\to L^2(J_T,\HH)  $ such~that 
$$
\|A_T(0,R_\e f) -f\|_{k}\le \e \|f\|_{k+1} \quad \text{for    $f\in H^{k+1}$.}
$$ Now let us take any   $M>0$ and any      $u_0, u_1\in B_{H^{k+1}}(0,M)$.~Applying the previous inequality with $f=u_1-A_T(u_0,0)$, we get  
$$
\|A_T(u_0, g_\e)-u_1\|_k \le \e \|u_1-A_T(u_0,0)\|_{k+1}\le \e C,
$$where   $g_\e= R_\e(u_1-A_T(u_0,0))$ and $C=C(M)>0$ is a constant. Combining this with Propositions~\ref{P:1.1} and~\ref{P:1.2}, we complete the proof of the theorem.
 \end{proof}
We close this section with the following result.
 \begin{corollary}\label{C:1.4}
Assume that the conditions of Theorem~\ref{T:1.3} are satisfied.~Then  
Eq.~\eqref{1.4} is approximately controllable in time $T>0$ by $\HH$-valued controls, i.e.,~for any~$\e>0$ and any~$u_0, u_1\in H^k$, there is a control   $\eta\in \Theta(u_1,h,T)\cap L^2(J_T,\HH)$ such that
$$
 \|S_T(u_0,h+\eta)-u_1\|_k<\e. 
$$
\end{corollary}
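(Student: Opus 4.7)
My plan to deduce Corollary~\ref{C:1.4} from Theorem~\ref{T:1.3} is the concatenation procedure sketched in the sentence right after the statement: first apply Theorem~\ref{T:1.3} on a short subinterval of $[0,T]$ to steer close to $u_1$, then keep the trajectory in a fixed neighbourhood of $u_1$ by repeatedly reapplying the same theorem on further subintervals. Because the target is the same state $u_1$ at every stage, no accumulation of errors occurs across iterations.

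More concretely, fix $\e>0$ and $u_0,u_1\in H^k$, and choose $\tilde u_0,\tilde u_1\in H^{k+1}$ such that $\|u_i-\tilde u_i\|_k$ is as small as needed (by density of $H^{k+1}$ in $H^k$). I would then pick a large integer $n$ (to be fixed below), partition $[0,T]$ into $n$ equal subintervals of length $\tau=T/n$, which corresponds to the choice $\de=1/n$ in Theorem~\ref{T:1.3}. On the first subinterval I apply Theorem~\ref{T:1.3} with initial datum $\tilde u_0$ and target $\tilde u_1$; on the $j$-th subinterval I apply the time-shifted analogue of Theorem~\ref{T:1.3} with initial datum the state reached at time $(j-1)\tau$ and the same target $\tilde u_1$. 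By Theorem~\ref{T:1.3}, the steering error on each subinterval can be made smaller than a prescribed $\rho(n)$, with $\rho(n)\to 0$ as $n\to\infty$, uniformly over a bounded subset of $H^{k+1}$-initial data. Concatenating the $n$ controls yields $\eta\in L^2(J_T,\HH)$ with $\|S_T(\tilde u_0,h+\eta)-\tilde u_1\|_k\le\rho(n)$. Combined with the continuity estimate of Proposition~\ref{P:1.1}\,(ii) (to pass from $\tilde u_0$ to $u_0$) and the triangle inequality (to pass from $\tilde u_1$ to $u_1$), this gives $\|S_T(u_0,h+\eta)-u_1\|_k<\e$ once $n$ is large enough and the approximations $\tilde u_0,\tilde u_1$ are close enough to $u_0,u_1$.

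The principal technical obstacle will be guaranteeing that the iteration is well posed uniformly in~$j$: Theorem~\ref{T:1.3} requires its initial datum to lie in $H^{k+1}$ and its convergence is uniform only over bounded subsets of $H^{k+1}$, so I must show that the intermediate states $u((j-1)\tau)$ stay in a fixed ball of $H^{k+1}$ throughout the procedure. This should follow from the explicit form~\eqref{1.22} of the control, which keeps $\|\eta^{(j)}\|_{L^2(J_\tau,\HH)}$ uniformly bounded as long as the initial-target pair does, combined with the higher-regularity version of Proposition~\ref{P:1.1} (with index $k+1$ in place of $k$) applied under the assumption that $h$ is sufficiently smooth. Together these give a uniform $H^{k+1}$ bound on each intermediate state and validate all $n$ successive applications of Theorem~\ref{T:1.3}.
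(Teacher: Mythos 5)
Your overall strategy (steer to $u_1$ quickly, then repeatedly re-target $u_1$ to fill the remaining time) is the same one the paper uses, but your implementation has a genuine gap exactly at the point you flag as ``the principal technical obstacle'', and the fix you propose does not work. Each application of Theorem~\ref{T:1.3} controls the terminal state only in $H^k$; the $H^{k+1}$ norm of $S_{T\de}(u_0,h+\eta_\de)$ is not estimated anywhere, and there is no reason for it to remain in a fixed ball of $H^{k+1}$ uniformly in $j$ and $n$. In fact it typically degenerates: the terminal state of a steering stage is close to $v(T)=A_T(u_0,g_\e)$, and the approximate right inverse $R_\e$ only guarantees $\|A_T(0,R_\e f)-f\|_{k}\le \e\|f\|_{k+1}$, with no bound on $\|A_T(0,R_\e f)\|_{k+1}$ as $\e\to 0$. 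Your proposed remedies fail for concrete reasons: the control \eqref{1.11} has $L^2(J_{T\de},\HH)$ norm of order $\de^{-3/2}$ (the term $\de^{-2}\zeta(\de^{-1}t)$ dominates), so it is \emph{not} uniformly bounded as $\de=1/n\to 0$; a version of Proposition~\ref{P:1.1} with $k+1$ in place of $k$ would require $h\in L^2(J_T,H^{k})$, whereas only $h\in L^2(J_T,H^{k-1})$ is assumed; and even the mere membership $u(j\tau)\in H^{k+1}$ at the right endpoint of a stage is unclear, since the solution lies only in $C(J,H^k)\cap L^2(J,H^{k+1})$. A related circularity appears in your treatment of the initial datum: the Lipschitz constant in Proposition~\ref{P:1.1}\,(ii) depends on the reference solution, hence on the (unboundedly large) control, so ``choose $\tilde u_0$ close enough to $u_0$'' cannot be arranged before the control is fixed.

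The paper's proof avoids all of this by not steering on every subinterval. After one short steering burst (extended from $H^{k+1}$ to $H^k$ data via the parabolic regularising property rather than by approximating $u_0$), it lets the system coast with $\eta=0$: Proposition~\ref{P:1.1}, applied to the reference trajectory $S_t(u_1,h)$, yields a radius $r\in(0,\e)$ and a time $\tau>0$, depending only on $u_1$ and $\e$, such that any solution issued from $B_{H^k}(u_1,r)$ with zero control stays in $B_{H^k}(u_1,\e)$ for time $\tau$. One then alternates coasting phases of length $\tau$ with short re-steering bursts back into $B_{H^k}(u_1,r)$ (using \eqref{1.23}); each cycle consumes at least $\tau$ units of time, so the number of iterations is bounded by $T/\tau$ and fixed in advance rather than tending to infinity, and the smoothing during the zero-control phase supplies the regularity needed to restart. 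To salvage your scheme, replace ``apply Theorem~\ref{T:1.3} on every subinterval with $\de=1/n$, $n\to\infty$'' by this fixed-$\tau$ coast-and-correct loop.
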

\begin{proof}
  By the regularising property of   the NS system (see Proposition~\ref{P:1.1}), the~solution  corresponding to   initial point $u_0\in H^k$ and control $\eta=0$ becomes instantaneously   smooth. Combining this with    Theorem~\ref{T:1.3}, we see that Eq.~\eqref{1.4} is approximately controllable in small time in the sense that,   for~any~$u_0, u_1\in H^k$,
 there is a   control   $\tilde\eta_\delta\in \Theta(u_0,h, T\delta) \cap  L^2(J_{T\delta},\HH)$ such that    
\begin{equation}\label{1.23}
S_{T\delta}(u_0,h+\tilde\eta_\delta)\to u_1 \quad\text{in $H^k$ as $\de\to 0^+$}.
\end{equation}
   Thus, to prove approximate controllability in fixed time,  it suffices to show that,  for any $T, \e>0$   and   any~$u_1\in H^k$,    there is  a   control $\eta_1
\in \Theta(u_1,h,T)\cap L^2(J_T,\HH)  $ such~that
$$
\|\RR_T(u_1,h+\eta_1) - u_1 \|_k<\e,
$$  where  the initial condition and the target coincide   with $u_1$.   
  By Proposition~\ref{P:1.1}, there is a time~$\tau>0$   such that the control~$\eta=0$ is in the set~$\Theta(u_1,h,\tau)$ and the function $S_\cdot(u_1,h):J_\tau\to H^k$ is continuous. Taking $\tau$ sufficiently small and using
the   property~(ii) in Proposition~\ref{P:1.1}, we find   a number  $r\in (0,\e)$  such that~$\eta= 0$ belongs to~$\Theta(v,h,\tau)$ for any~$v \in B_{H^k}(u_1,r)$      and
$$
  	\|S_t(v,h)-u_1\|_k<\e,   \quad  t\in J_\tau.  
$$   Thus starting from any  initial point $v \in B_{H^k}(u_1,r)$, the solution corresponding to~$\eta=0$   remains in the ball~$B_{H^k}(u_1,\e)$  on the   time interval $J_\tau$.~If $\tau>T$, then   the proof   is complete.~Otherwise,  applying~\eqref{1.23}   with initial point $u_0'=S_\tau(v,h)$ and target~$u_1$, we find a small time $T'< T-\tau$ and   a   control $ \eta_2\in \Theta(u_0',h,T')\cap L^2(J_{T'},\HH)$ such that 
$$	\|\RR_{T'}(u_0',h+\eta_2) - u_1 \|_k<r.
$$ 
By the choice of $r$ and $\tau$,      if $2\tau+T'>T$, then again  the proof is complete. Otherwise, we   complete the proof   by iterating the above argument finitely many~times.
\end{proof}
\begin{remark}\label{R:1.5}
In this corollary, the control $\eta$ is not of the form \eqref{1.22}.~The affine dependence on $(u_0,u_1)$ is lost after the first application of  zero control in the $r$-neighborhood of~$u_1$. Indeed, this comes from the fact that $S_t(u_1, 0)$ is nonlinear in~$u_1$.~Analysing the   above proof, we easily see that for given~$\e, M>0$ and any~$u_0, u_1\in B_{H^{k+1}}(0,M)$, the restriction~$\eta|_{[0,T\de]}$ of the control     is of the form~\eqref{1.22}, while  the   restriction~$\eta|_{[T\de, T]}$ does not depend~on~$u_0$. 
 \end{remark}

%\begin{remark}\label{R:1.6} The results of this section    remain true when the NS system is considered on a  bounded smooth domain with Dirichlet boundary conditions.  The periodic boundary conditions will be important in   the next  section, where concrete examples of subspaces $\HH$    are  discussed. \end{remark}

\section{Proof of the Main Theorem}\label{S:2}

The   goal of this section is to show that      Conditions~{\rm(\hyperlink{C1}{\rm C$_1$})} and~{\rm(\hyperlink{C2}{\rm C$_2$})} are verified for different subspaces $\HH$     spanned by  a finite number of    eigenfunctions of the Stokes operator.~Also we prove     the Main Theorem formulated in~the~Introduction.

\subsection{More general formulation}\label{S:2.1}

  Let us fix any numbers $\alpha_i>0$, $i=1, 2, 3$ and   endow the space $\R^3$ with the scalar product 
$$
  \lag x,y \rag_\alpha =\sum_{i=1}^3 \alpha_i^{-1} x_i\, y_i.
$$
  For any~$\ell\in \Z^3_*$, let us  denote  
$$
c_\ell(x)=l(\ell)\cos\lag  \ell, x \rag_\alpha,\quad s_\ell(x) = l(\ell)
\sin\lag  \ell, x \rag_\alpha,
$$
where    $\{l(\ell),l(-\ell)\}$  is any orthonormal basis   in the hyperplane 
$$
\ell^{\bot_\alpha} =\{x\in \R^3:\lag x,\ell\rag_\alpha=0\}.
$$
The family~$\{c_\ell, s_\ell\}_{\ell\in \Z^3_*}$ is a complete   orthogonal system   in $H^k$ composed of eigenfunctions of the Stokes operator.~Let $\KK\subset \Z^3_*$ be a finite symmetric set (i.e.,~$\KK=-\KK$).~We~associate with $\KK$   a non-decreasing sequence of finite-dimensional subspaces~by 
\begin{gather}
\HH_0(\KK)= \lspan \{c_{\ell}, s_{\ell}: \ell\in \KK\},\label{2.1}\\
\HH_i(\KK)= \lspan \{\eta_1 +Q(\eta_2,\xi)  :  \eta_1,\eta_2\in\HH_{i-1}(\KK),\,  \xi\in\HH_0(\KK)\},\quad i\ge1,\label{2.2}
\end{gather}where $Q$ is the bilinear form defined by \eqref{1.7}.
\begin{definition} 
	We say that   $\KK\subset \Z_*^3$ is saturating if the subspace~$\cup_{i=1}^\ty \HH_i(\KK)$ is dense in $H^k$.
\end{definition}
 The following  theorem is proved in the next two subsections.
\begin{theorem}\label{T:2.2}
 Assume that
   $\KK\subset \Z_*^3$  is a saturating set.~Then~Conditions~{\rm(\hyperlink{C1}{\rm C$_1$})} and~{\rm(\hyperlink{C2}{\rm C$_2$})} are satisfied for the subspace~$\HH=\HH_1(\KK)$, and  therefore   the conclusions of Theorem~\ref{T:1.3} and Corollary~\ref{C:1.4} hold.
\end{theorem}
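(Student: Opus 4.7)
The plan is to verify that Conditions~{\rm(\hyperlink{C1}{\rm C$_1$})} and~{\rm(\hyperlink{C2}{\rm C$_2$})} hold for $\HH=\HH_1(\KK)$, after which Theorem~\ref{T:1.3} and Corollary~\ref{C:1.4} give the conclusion directly. For~{\rm(\hyperlink{C1}{\rm C$_1$})}, I would build $w$ explicitly as a curve in $\HH_0(\KK)$. Let $\{e_j\}_{j\in J}$ enumerate the basis $\{c_\ell,s_\ell\}_{\ell\in\KK}$ of $\HH_0(\KK)$, and set $w(t)=\sum_{j\in J}\psi_j(t)\,e_j$, where $\psi_j\in C^\infty(J_T,\R)$ satisfy $\psi_j(0)=\psi_j(T)=0$ and are chosen so that the derivatives $\{\dot\psi_j\}$ form an \emph{observable family} (in a sense made precise below). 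All the regularity requirements are automatic because $\HH_0(\KK)$ is finite-dimensional and consists of smooth trigonometric polynomials, and $w(0)=w(T)=0$ by construction. Each $e_j$ is an eigenvector of the Stokes operator, so $Lw(t)\in\HH_0(\KK)\subseteq\HH_1(\KK)$. Finally,
$$
\zeta(t)=\dot w(t)+B(w(t))=\sum_j\dot\psi_j(t)\,e_j+\tfrac12\sum_{j,k}\psi_j(t)\psi_k(t)\,Q(e_j,e_k)
$$
lies in $\HH_1(\KK)$ for a.e.~$t$: the first sum sits in $\HH_0(\KK)\subseteq\HH_1(\KK)$, while each $Q(e_j,e_k)$ lies in $\HH_1(\KK)$ by~\eqref{2.2} (take $\eta_1=0$, $\eta_2=e_j$, $\xi=e_k$).

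For the harder step~{\rm(\hyperlink{C2}{\rm C$_2$})}, I would proceed by duality combined with an induction driven by the saturation. A standard integration by parts shows that the map $g\mapsto v(T)$ for the linearised Euler equation $\dot v+Q(v,w)=g$, $v(0)=0$, has dense range if and only if the following adjoint injectivity holds: whenever $z\in C(J_T,H)$ solves the backward equation $-\dot z+Q^*(z,w)=0$ with terminal datum $z(T)=z_1$, where $Q^*(z,w)=\Pi[(\nabla w)^{\top}z-(w\cdot\nabla)z]$, and the orthogonal projection of $z(t)$ onto $\HH_1(\KK)$ vanishes for a.e.~$t$, then $z_1=0$. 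Since by the saturation hypothesis $\bigcup_{i\ge1}\HH_i(\KK)$ is dense in $H^k$, it suffices to prove by induction on $i\ge1$ that $z(t)\perp\HH_i(\KK)$ for all $t\in J_T$. The base case $i=1$ is the hypothesis combined with continuity of $z$. For the inductive step, fix $\eta\in\HH_i(\KK)$; differentiating $\lag z(t),\eta\rag\equiv0$ in $t$ and applying the adjoint equation together with the identity $\lag Q^*(z,w),\eta\rag=\lag z,Q(\eta,w)\rag$ yields
$$
\sum_{j\in J}\psi_j(t)\,\lag z(t),Q(\eta,e_j)\rag=0\quad\text{for every }t\in J_T.
$$
Each $Q(\eta,e_j)$ belongs to $\HH_{i+1}(\KK)$ by~\eqref{2.2}, and $\HH_{i+1}(\KK)$ is spanned by $\HH_i(\KK)$ together with such $Q(\eta,e_j)$; hence extracting $\lag z(t),Q(\eta,e_j)\rag=0$ for every $j$ closes the induction.

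The main obstacle is precisely this last extraction: since $F_j(t):=\lag z(t),Q(\eta,e_j)\rag$ itself depends on~$t$, the vanishing of $\sum_j\psi_j(t)F_j(t)$ does not follow from linear independence of the $\psi_j$'s alone. I would pick the $\psi_j$'s to be real-analytic, which by the linear adjoint equation makes each $F_j$ real-analytic as well, and then Taylor-expand the identity at $t=0$. Using $\psi_j(0)=0$, this produces a triangular hierarchy of linear equations on the Taylor coefficients $F_j^{(m)}(0)$ whose solvability is governed by a Wronskian-type nondegeneracy condition on the $\psi_j^{(k)}(0)$---precisely the observability property referred to above. Choosing, for instance, $\psi_j(t)=t(T-t)\varphi_j(t)$ with $\varphi_j$ analytic and generic (for example, carrying distinct exponential or trigonometric modes) renders these Wronskians non-degenerate, forces $F_j\equiv0$, and thus completes the induction and the proof.
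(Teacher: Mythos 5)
Your verification of (C$_1$) is correct and coincides with the paper's own argument (Section~2.2). The problems are both in (C$_2$).

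First, the duality. You test the range of $g\mapsto v(T)$ against the $L^2$-adjoint problem, so injectivity of your adjoint would give density of the reachable set in $H$, not in a Sobolev space of the order required by Condition~(\hyperlink{C2}{C$_2$}). For the linearised 2D Navier--Stokes system one can upgrade such a statement by parabolic smoothing, but Eq.~\eqref{1.6} has no diffusion term; the paper explicitly notes that for this reason the $L^2$-dual problem cannot be used, and instead works with the $H^k$-adjoint $R(T,\tau)^*$ of the two-parameter resolving operator of \eqref{2.8}, never writing a PDE for the adjoint state. This part of your argument could probably be repaired by redoing the duality in $H^k$, but as written it proves the wrong density.

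Second, and more seriously, the extraction step --- which you rightly single out as the main obstacle --- is not resolved by your analyticity/Wronskian device. You arrive at a relation $\sum_j\psi_j(t)F_j(t)=0$ in which both the $\psi_j$ and the unknown coefficients $F_j(t)=\lag z(t),Q(\eta,e_j)\rag$ are analytic, and no genericity of the $\psi_j$ can force $F_j\equiv0$ from such a relation: already for two functions, $\psi_1F_1+\psi_2F_2=0$ is solved nontrivially by $F_1=\psi_2\,g$, $F_2=-\psi_1\,g$. Your Taylor hierarchy at $t=0$ is underdetermined for the same reason: each new order yields one scalar equation but introduces $n$ new unknowns $F_j^{(m)}(0)$, and differentiating $F_j$ via the adjoint equation produces pairings of $z$ with elements of $\HH_{i+2}(\KK)$, which the induction does not yet control, so the system never closes. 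The paper circumvents this by two devices, both essential. (i) It does not differentiate $\lag z(t),\eta\rag$ in the running time; it fixes $z_1=R(T,T_1)^*z$ and differentiates $(R(T_1,\tau)g,z_1)_k$ in the \emph{initial} time $\tau$. In the resulting identity the factor multiplying $\phi_\ell(\tau)$ is the \emph{constant} $(Q(c_\ell,g),z_1)_k$ times the fixed known function $\phi(\tau)$, so the $\tau$-dependence of the unknown coefficient disappears. (ii) Even then a genuinely $\tau$-dependent continuous remainder $b(\tau)$ survives, and the conclusion is extracted by taking the $\phi_\ell$ discontinuous on disjoint countable dense subsets of $J_T$: observability then follows from a regularity mismatch between the continuous/$C^1$ coefficients and the densely discontinuous $\phi_\ell$ --- a mechanism that your choice of real-analytic $\psi_j$ destroys rather than implements.
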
 
The following theorem provides a practical way for constructing saturating sets.~Recall that~$\KK\subset \Z^3_*$ is   a generator if any vector of~$\Z^3$
is a finite linear combination of vectors of $\KK$ with integer
coefficients.
\begin{theorem}\label{T:2.3}
	If  a finite symmetric set $\KK\subset \Z^3_*$ is a    generator, then it is saturating.
\end{theorem}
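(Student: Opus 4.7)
The plan is to show that every Stokes eigenfunction $c_\ell,s_\ell$ with $\ell\in\Z^3_*$ eventually belongs to some $\HH_i(\KK)$; density in $H^k$ then follows from the completeness of the Stokes eigenbasis. Throughout, write $\MM_i\subset\Z^3_*$ for the set of frequencies $\gamma$ such that both $c_\gamma$ and $s_\gamma$ lie in $\HH_i(\KK)$, so that $\KK\subset\MM_0$.

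First I would make the key bilinear computation explicit. Taking $\eta=a\,\phi_\alpha$ and $\xi=b\,\phi_\beta$ with $a\in\alpha^\perp$, $b\in\beta^\perp$ and $\phi_\alpha\in\{\cos\lag\alpha,\cdot\rag,\sin\lag\alpha,\cdot\rag\}$, the product-to-sum identities rewrite $\lag\eta,\nabla\rag\xi+\lag\xi,\nabla\rag\eta$ as a linear combination of trigonometric functions at frequencies $\alpha\pm\beta$, all with the same vector coefficient $\lag a,\beta\rag\, b+\lag b,\alpha\rag\, a$ (up to signs dictated by the sin/cos choice). The Leray projector $\Pi$ then cuts these coefficients down to $(\alpha\pm\beta)^\perp$; this is the only place where the structure of the nonlinearity is used.

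Next I would set up the following key induction lemma: if $\gamma\in\MM_i$, $\beta\in\KK$, $\gamma+\beta\neq 0$ and $\gamma$ is not parallel to $\beta$, then $\gamma+\beta\in\MM_{i+1}$. With $\alpha=\gamma$ in the formula above, varying $a\in\gamma^\perp$ and $b\in\beta^\perp$ (each two-dimensional) and ranging over the four sin-cos parity combinations, one checks that the $\Pi$-image of the coefficient sweeps out the whole two-dimensional plane $(\gamma+\beta)^\perp$, so that both $c_{\gamma+\beta}$ and $s_{\gamma+\beta}$ are produced inside $\HH_{i+1}(\KK)$. To exhaust $\Z^3_*$ I would then exploit the hypotheses on $\KK$: any $\gamma\in\Z^3_*$ admits a decomposition $\gamma=\beta_1+\cdots+\beta_N$ with $\beta_j\in\KK$ (using symmetry), and by inserting auxiliary cancelling detours of the form $\beta'-\beta'$ with $\beta'\in\KK$ chosen not parallel to the current partial sum, one arranges the non-parallel hypothesis of the lemma at every intermediate step. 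Iterating then puts $\gamma$ in $\MM_N$.

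The main obstacle is the degeneracy when $\gamma\parallel\beta$: both $\lag a,\beta\rag$ and $\lag b,\alpha\rag$ vanish (since $a,b\in\alpha^\perp=\beta^\perp$), and the bilinear term produces no mode at $\gamma+\beta$ at all. The detour mechanism described above is precisely what repairs this; it succeeds because the generator hypothesis forces $\KK$ to span $\R^3$ and hence contain three linearly independent vectors, so a $\beta'\in\KK$ transverse to $\gamma$ always exists. The technically delicate bookkeeping is to verify that the detours can be chosen compatibly at every level, so that every new partial sum remains non-parallel to the element of $\KK$ it is about to be combined with; once this is done the saturation property follows by finite induction.
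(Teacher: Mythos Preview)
Your key induction lemma is false. The claim that $P_{\gamma+\beta}\bigl(\lag a,\beta\rag b+\lag b,\gamma\rag a\bigr)$ sweeps out all of $(\gamma+\beta)^\perp$ as $(a,b)$ ranges over $\gamma^\perp\times\beta^\perp$ breaks down precisely when $|\gamma|=|\beta|$. Indeed, for any such $a,b$ one has
\[
\bigl\langle\, \lag a,\beta\rag b+\lag b,\gamma\rag a,\ \gamma-\beta\,\bigr\rangle
=\lag a,\beta\rag\lag b,\gamma\rag-\lag b,\gamma\rag\lag a,\beta\rag=0,
\]
so the unprojected coefficient always lies in $(\gamma-\beta)^\perp$. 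When $|\gamma|=|\beta|$ one also has $\gamma+\beta\perp\gamma-\beta$, hence $P_{\gamma+\beta}$ maps $(\gamma-\beta)^\perp$ into $(\gamma+\beta)^\perp\cap(\gamma-\beta)^\perp=\gamma^\perp\cap\beta^\perp=\R\,\delta(\gamma,\beta)$, which is only one-dimensional. Concretely, for $\gamma=e_1$, $\beta=e_2$ the only direction you can produce at frequency $e_1+e_2$ is $e_3$; the vector $(e_1-e_2)\cos\lag e_1+e_2,x\rag$ is never reached. The four sin/cos parities do not help, since every $\gamma+\beta$ output carries the same vector coefficient up to sign.

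This obstruction is not academic: for the canonical generator $\KK=\{\pm e_1,\pm e_2,\pm e_3\}$ every pair in $\KK$ has equal length, so your very first step stalls. Your detour mechanism is tailored to the \emph{parallel} degeneracy and does not, as written, address equal-length pairs; nor is there any guarantee that $\KK$ contains vectors of different lengths. The paper's proof confronts exactly this: it first obtains only the $\delta$-direction after one application of~$Q$ (its Step~2.1), and then spends two further iterations bringing in a third linearly independent vector $r\in\KK$ to recover the missing direction in $(\ell_1+\ell_2)^\perp$. This is why the inclusion it proves is $\HH_i(\KK_j)\subset\HH_{i+3}(\KK_{j-1})$ rather than $\HH_{i+1}$.
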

See Section~\ref{S:3} for a proof of this result. Now we turn to the proof of the results formulated in the Introduction.
\begin{proof}[Proof of the Main Theorem and the Corollary]
 For any   $\ell\in \R^3_*$, we denote by~$P_\ell$   the orthogonal projection in $\R^3$ onto the hyperplane   $\ell^{\bot_\alpha}$. Then,  for any $a\in \R^3$, we have the equalities 
$$
\Pi(a\cos\lag\ell,x\rag_\alpha)=(P_\ell a)\cos\lag\ell,x\rag_\alpha,\quad\Pi(a\sin\lag\ell,x\rag_\alpha)=(P_\ell a)\sin\lag\ell,x\rag_\alpha.
$$ These equalities and   some  simple   trigonometric computations show~that 
\begin{align}2Q(a\cos\lag\ell_1,x\rag_\alpha,&\,b\sin\lag\ell_2,x\rag_\alpha)=\cos\lag\ell_1-\ell_2,x\rag_\alpha P_{\ell_1-\ell_2}\left(\lag a,\ell_2\rag_\alpha b-\lag b,\ell_1\rag_\alpha a\right)\nonumber\\&\quad+\cos\lag\ell_1+\ell_2,x\rag_\alpha P_{\ell_1+\ell_2}\left(\lag a,\ell_2\rag_\alpha b+\lag b,\ell_1\rag_\alpha a\right),\label{2.3}\\
2Q(a\cos\lag\ell_1,x\rag_\alpha,&\,b\cos\lag\ell_2,x\rag_\alpha)=\sin\lag\ell_1-\ell_2,x\rag_\alpha P_{\ell_1-\ell_2}\left(\lag a,\ell_2\rag_\alpha b-\lag b,\ell_1\rag_\alpha a\right)\nonumber\\&\quad-\sin\lag\ell_1+\ell_2,x\rag_\alpha P_{\ell_1+\ell_2}\left(\lag a,\ell_2\rag_\alpha b+\lag b,\ell_1\rag_\alpha a\right),\label{2.4}
\\
2Q(a \sin \lag \ell_1,x\rag_\alpha, &\, b\sin \lag \ell_2,x\rag_\alpha) = \sin\lag \ell_1-\ell_2,x \rag_\alpha P_{\ell_1-\ell_2} \left(\lag a,\ell_2\rag_\alpha b- \lag b,\ell_1\rag_\alpha a \right)\nonumber\\&\quad +\sin\lag \ell_1+\ell_2,x \rag_\alpha P_{\ell_1+\ell_2} \left(\lag a,\ell_2\rag_\alpha b+ \lag b,\ell_1\rag_\alpha a \right)\label{2.5}
\end{align}
for any $\ell_1,\ell_2\in \Z^3_*$, $a\in \ell_1^{\bot_\alpha}$, and $b\in \ell_2^{\bot_\alpha}$.  
 	Let us consider the set
$$
\tilde \KK=\{(\pm1,0,0), (0,\pm1,0), (0,0,\pm1)\}
$$  which is, clearly, a generator. Due to identities~\eqref{2.3}-\eqref{2.5}, the subspace  $\HH_1(\tilde\KK)$ is contained in the subspace defined by~\eqref{0.4}.  Applying  Theorems~\ref{T:2.2} and~\ref{T:2.3}~with      the set $\tilde \KK$   and taking $\alpha=(1,1,1)$, 
  we obtain the Main Theorem and the~Corollary.
 \end{proof}

  \begin{remark}\label{R:2.4}
 The papers  \cite{shirikyan-cmp2006, shirikyan-aihp2007, VN-2015} provide a sharp version of the Corollary regarding the dimension of the control space.~In these papers, a nonlinear   saturation property is defined for the 3D NS system~\eqref{1.4}, and  in the case $h\equiv 0$   and $\alpha=(1,1,1)$, the system is proved to be approximately controllable in~time~$T>0$ by $\HH_0(\KK)$-valued controls if and only if $\KK$ is a generator (see Theorem~4.5 in~\cite{VN-2015}).  The subspace $\HH_1(\KK)$ is strictly larger than $\HH_0(\KK)$.
  \end{remark}

 \subsection{Checking  Condition (C$_1$)}\label{S:2.2}
 Let us denote $\HH=\HH_1(\KK)$ and consider 
  the function
\begin{equation}\label{2.6}
 w(t)=\sum_{\ell\in \KK} \left(\psi^c_\ell(t)  \,c_\ell+\psi^s_\ell(t) \, s_\ell\right), 
 \end{equation}
 where 
  $\{\psi_\ell^c,\psi_\ell^s\}_{\ell\in \KK}$ are  any functions in $ W^{1,2}(J_T,\R)$ verifying the boundary conditions  
  $$\psi_\ell^c(0)=\psi_\ell^c(T)=\psi_\ell^s(0)=\psi_\ell^s(T)=0.
  $$ As $ c_\ell$ and $  s_\ell$ are eigenfunctions of the Stokes operator, we have  $Lw(t)\in  \HH$ for~$t\in J_T$.
   Let us denote~$\zeta=\dot w+B(w)$ and show that~$\zeta\in L^2(J_T, \HH)$. Indeed, we have $\dot w \in L^2(J_T, \HH)$ by the  construction. Moreover,   the equality  
$$  B(w)=\sum_{\ell_1,\ell_2\in\KK} Q(\psi^c_{\ell_1}(t)  c_{\ell_1}+\psi^s_{\ell_1}(t)  s_{\ell_1}, \psi^c_{\ell_2}(t)  c_{\ell_2}+\psi^s_{\ell_2}(t)  s_{\ell_2})
$$   implies that   $B(w)\in C(J_T, \HH)$. Thus,  Condition~{\rm(\hyperlink{C1}{\rm C$_1$})} is~satisfied.

  \subsection{Checking  Condition   (C$_2$)}\label{S:2.3}

  Condition~{\rm(\hyperlink{C2}{\rm C$_2$})} is   more subtle  and   is  satisfied under     additional hypotheses on the functions   $\{\psi_\ell^c, \psi_\ell^s\}_{\ell\in \KK}$ entering~\eqref{2.6}.    We shall use the notion of 
  observable family of functions from~\cite{KNS-2018}. 
  
    To prove the approximate controllability of Eq.~\eqref{1.6}, we shall  use some arguments close to the ones   in Section~4 in~\cite{KNS-2018},~where the   controllability of the linearised
       2D NS system is established.        
An important   difference is that there is no    diffusion term in   Eq.~\eqref{1.6}, so we do not have a   parabolic regularisation property.~As a consequence,  we cannot use the $L^2$-adjoint~problem and the~$H^k$-adjoint is of rather complicated form and seems to be ill-suited for our   purposes. Instead of using the adjoint~problem, we use   the equation 
satisfied by the derivative of the resolving operator with respect to the initial condition (see~\eqref{2.13}).

 {\it Step~1.~Observable family.}~A family   $\{\phi_i\}_{i=1}^n\subset L^2(J_T,\R)$ is said to be  observable\footnote{Note that, the      observability property we use  here is     stronger than the one introduced in Definition~4.1 in~\cite{KNS-2018}.} if  for any subinterval $J\subset J_T$, any continuous function~$b:J\to\R$,   and any $C^1$-functions $a_i:J\to\R$   
 	the equality
	\begin{equation} \label{2.7}
		b(t)+\sum_{i=1}^n a_i(t)\phi_i(t) =0\quad\text{in $L^2(J, \R)$}
	\end{equation}
	implies that     $a_i\equiv b\equiv0$, $1\le i\le n$ on $J$.~An example of  observable family can be constructed as follows.~Let
	$\phi_i  :J_T\to \R$ be bounded measurable functions    having
	  left and right limits at any point of $J_T$.~Moreover, let  there be disjoint countable dense sets~$\{\D_i \}_{i=1}^n$ in~$J_T$ such that~$\phi_i$  is discontinuous on~$\D_i$ and  continuous on~$J_T\backslash \D_i$.~Then the family~$\{\phi_i\}_{i=1}^n$ is~observable. Indeed,   take any~$1\le i\le n$ and any $s\in \D_i$. All the functions~$\phi_j$, $j\neq i$ are continuous at~$s$, so the jump at   $s$ of the function on the left-hand side of~\eqref{2.7} is equal to~$a_i(s)(\phi_i(s^+)-\phi_i(s^-)=0$. It follows that  $a_i(s)=0$ for any $s\in \D_i$, hence $a_i\equiv0$ on $J$,  by density and continuity.  By~\eqref{2.7}, we have also $b\equiv 0$ on $J$.

Let us now fix  an observable family of functions $\{\phi_\ell^c, \phi_\ell^s\}_{\ell\in \KK}\subset L^2(J_T,\R)$ and~denote
$$
\psi_\ell^c(t)=\phi(t)\int_0^t\phi_\ell^c(\tau) \,\dd \tau, \quad  \psi_\ell^s(t)=\phi(t)\int_0^t\phi_\ell^s(\tau) \,\dd \tau, \quad t\in J_T, 
$$where $\phi:J_T\to \R $ is  a $C^1$-function such that $\phi(t)=0$ if and only if $t=T$.
  Of~course, Condition~{\rm(\hyperlink{C1}{\rm C$_1$})} remains true in this case.

 {\it Step~2.~Reduction.}  Let us fix any $k\ge3$ and denote by
 $R(t,\tau):H^k\to H^k$,   $0\le \tau\le t\le 1$    the two-parameter resolving operator of the linearised problem 
	\begin{equation} \label{2.8}
		\dot v +Q (w,v)=0, \quad v(\tau)=v_0.
	\end{equation}Then
	$$
	A:L^2(J_T,H^k) \to H^k, \quad g\mapsto \int_0^T R(T,\tau) g(\tau)\,\dd \tau, 
	$$is the resolving operator of Eq.~\eqref{1.6}   with initial condition~$v(0)=0.$
	Denote by~${\mathsf P}_\HH:H^k\to H^k$      the orthogonal projection onto~$\HH$ in~$H^k$. Our goal is to show that the image of the linear operator
	$$
	A_1:L^2(J_T, H^k)\to H^k, \quad   A_1=A{\mathsf P}_\HH
	$$is dense in $H^k$. It is   equivalent to show that the kernel of the adjoint operator
	$$
	A_1^*: H^k \to L^2(J_T, H^k), \quad z\mapsto {\mathsf P}_\HH  R(T,\tau)^* z
	$$ is trivial, where $  R(T,\tau)^*:H^k\to H^k$ is the $H^k$-adjoint of $R(T,\tau).$
	
	{\it Step~3.~Triviality of~$\ker A_1^*$.}~Let us take any $z \in \ker A_1^*$ and show that  $z=0.$ Indeed, for any~$g\in\HH$,  we have
$$
	\left(g,R (T,\tau)^*z\right)_{k}=0\quad \text{for a.e. $\tau\in J_T$}.
$$ This implies that 
\begin{equation}\label{2.9}
	\left(R (T,\tau) g, z\right)_{k}=0\quad \text{for any  $\tau\in J_T$},
\end{equation}
by continuity in $\tau$  of $R (T,\tau) g$. Let fix any $T_1\in (0,T)$ and 
  rewrite this equality as follows:
\begin{equation} \label{2.10}
	\left(R (T_1,\tau) g, z_1\right)_{k}=0\quad \text{for any  $\tau\in J_{T_1}$},
\end{equation}where $z_1=R (T,T_1)^*z$. Taking $\tau=T_1$, we obtain
\begin{equation} \label{2.11}
	\left(g,z_1\right)_{k}=0,
\end{equation}
i.e., $z_1$ is orthogonal to $\HH=\HH_1(\KK)$ in $H^k$. Let us show that $z_1$ is orthogonal also to~$\HH_2(\KK)$. To this end, let us denote
\begin{equation}\label{2.12}
y(t,\tau)=R (\tau+t,\tau) g,
\end{equation} and note that $y(t,\tau)$ is the solution of the problem
\begin{align*}
\dot y(t,\tau)+Q(w(\tau+t),y(t,\tau))&=0, \quad   t\in (0, T-\tau), \\
y(0,\tau)&=g. 
\end{align*}
It follows that  $Y(t,\tau)=\frac{\p }{\p \tau} y(t,\tau)$ is the solution of \begin{align}
\dot Y(t,\tau)+Q(w(\tau+t),Y(t,\tau))+Q(\dot w(\tau+t),y(t,\tau))&=0, \,  t\in (0, T-\tau),\label{2.13}\\
Y(0,\tau)&=0.\label{2.14}
\end{align}
On the other hand, taking the derivative in $\tau$ of~\eqref{2.12} and choosing~$t=T_1-\tau$, we obtain
\begin{align}
	 \frac{\p }{\p \tau} R(T_1,\tau)g&= Y(T_1-\tau,\tau)-\dot R(T_1,\tau)g\nonumber\\&=Y(T_1-\tau,\tau)+ Q(w(T_1),R(T_1,\tau)g).\label{2.15}
\end{align} In the last equality, we used Eq.~\eqref{2.8}. Taking the derivative of~\eqref{2.10} in $\tau$ and using equalities \eqref{2.13}-\eqref{2.15}, we arrive at
\begin{align*}
0&=\int_0^{T_1-\tau} 	 \left(Q(w(\tau+t),Y(t,\tau))+Q(\dot w(\tau+t),y(t,\tau)), z_1\right)_k  \dd t\\
&\quad-(Q(w(T_1),R(T_1,\tau)g), z_1)_k\\
&=\int_0^{T_1-\tau}	 \left(Q(w(\tau+t),Y(t,\tau)), z_1\right)_k  \dd t+\int_\tau^{T_1}	  \left(Q(\dot w(t),R (t,\tau) g), z_1\right)_k  \dd t\\
&\quad-(Q(w(T_1),R(T_1,\tau)g), z_1)_k.
\end{align*}Differentiating this in $\tau$, we get  
$$
b(\tau)+\sum_{\ell\in\KK}\left(a_\ell^c(\tau)\phi_\ell^c(\tau)+a_\ell^s(\tau)\phi_\ell^s(\tau)\right)=0\quad\text{for $\tau\in J_{T_1}$}, 
$$
where 
\begin{align*}
b(\tau)&=	\frac{\p }{\p \tau} \int_0^{T_1-\tau}\!\!	 \left(Q(w(\tau+t),Y(t,\tau)), z_1\right)_k  \dd t  
-  	 \big(  Q(\dot \phi(\tau) \tilde w(\tau),  g), z_1\big)_k   \\&\quad+\int_\tau^{T_1}\!\!	  \big(Q(\dot w(t),\frac{\p }{\p \tau} R (t,\tau) g), z_1\big)_k  \dd t -\frac{\p }{\p \tau}(Q(w(T_1),R(T_1,\tau)g), z_1)_k,
\\a_\ell^c(\tau)&=-  \phi(\tau) 	  \left(Q( c_\ell,  g), z_1\right)_k   , \,\,  a_\ell^s(\tau) =-  \phi(\tau)   \left(Q( s_\ell,  g), z_1\right)_k   ,\\
\tilde w(\tau)&=\sum_{\ell\in \KK} \int_0^\tau\left(\phi^c_\ell(t)  \,c_\ell+\phi^s_\ell(t) \, s_\ell\right) \dd t. 
\end{align*} 
 The functions~$\{a_\ell^c, a_\ell^s\}_{\ell\in\KK}$ are continuously  differentiable and~$b$ is continuous on~$J_{T_1}$.~By observability of~$\{\phi_\ell^c, \phi_\ell^s\}_{\ell\in \KK}$, we have thus  $a_\ell^c\equiv a_\ell^s\equiv0$ on $J_{T_1}$ for any~$\ell\in\KK$. As a consequence,  
$$
	 \left(Q( c_\ell,  g), z_1\right)_k   = \left(Q( s_\ell,  g), z_1\right)_k  =0,  
$$which, combined with~\eqref{2.11}, implies that  $z_1$ is orthogonal to $\HH_2(\KK)$. Recalling the definition of $z_1$, we conclude that  
$$
	 \left(R(T,T_1)g, z \right)_k       =0  	\quad \text {for any $T_1\in (0,T)$ and $g\in \HH_2(\KK)$.}
$$
Denoting $T_1$ by $\tau$, we obtain
\eqref{2.9}, but now for any~$g$ in~$\HH_2(\KK)$. Iterating this argument, we prove \eqref{2.9} for any~$g\in \cup_{i=1}^\ty\HH_i(\KK)$.~Taking $\tau=T$ and using the saturation hypothesis, we get that $z=0$.~This completes the proof of Condition~{\rm(\hyperlink{C2}{\rm C$_2$})} and that of Theorem~\ref{T:2.2}.

 \section{Saturation property}\label{S:3}

 In this section, we prove Theorem~\ref{T:2.3}.~In what follows, we    write~$\ell_1\nparallel \ell_2$ to indicate that  the vectors $\ell_1,  \ell_2\in \R^3$ are non-parallel.   
    
 {\it Step~1.~Reduction.}~Let us  define a   sequence of finite symmetric  sets in $\Z^3$ as~follows:
 $$
 \KK_0=\KK, \quad \KK_j=\KK_{j-1}\cup\{\ell_1+\ell_2: \ell_1\in \KK_{j-1},\, \ell_2\in \KK,\, \ell_1\nparallel \ell_2\}, \quad j\ge1.
 $$
As $\KK$ is a generator,  this sequence is strictly increasing
 and
 \begin{equation}\label{3.1}
 	\cup_{j=1}^\ty  \KK_j=\Z^3.
 \end{equation} 
 Let us assume that we have shown the inclusion 
 \begin{equation}\label{3.2}
 	\HH_i(\KK_j) \subset \HH_{i+3} (\KK_{j-1}) \quad \text{for any $i\ge0,\,  j \ge1$}, 
 \end{equation}where $\HH_i(\KK_j)$ are the subspaces  defined by \eqref{2.1} and  \eqref{2.2} with $\KK=\KK_j$ and $c_0=s_0=0.$ Then \eqref{3.2} implies that
$$
\HH_0(\KK_j) \subset \HH_3 (\KK_{j-1}) \subset  \HH_6 (\KK_{j-2})\subset \ldots \subset    \HH_{3j}(\KK).
$$ Combining this with \eqref{3.1}, we see that the subspace $\cup_{j=1}^\ty \HH_j(\KK)$ is dense in $H^k$, i.e.,~$\KK$ is saturating. Thus we need to prove \eqref{3.2}.
 
  {\it Step~2.~Proof of \eqref{3.2}.}~We first consider    a particular case.
  
  {\it Step~2.1.}~Let us take any $\ell_1\in \KK_{j-1}$ and  $\ell_2\in \KK$ such that $\ell_1\nparallel \ell_2$ and 
    denote by $ \de=\de(\ell_1,\ell_2)$ one of   two unit vectors in $\ell_1^{\bot_\alpha} \cap \ell_2^{\bot_\alpha}$. In this step,  we show that     
        \begin{equation}\label{3.3}
    \de\cos\lag\ell_1+\ell_2,x\rag_\alpha,\,\de\sin\lag\ell_1+\ell_2,x\rag_\alpha\in\HH_{i+1}(\KK_{j-1}).
    \end{equation}
Indeed, by identity~\eqref{2.3}, we have 
\begin{align} 
2Q(b\cos\lag\ell_2,x\rag_\alpha,&\, a\sin\lag\ell_1,x\rag_\alpha)=-\cos\lag\ell_1-\ell_2,x\rag_\alpha P_{\ell_1-\ell_2}\left(\lag a,\ell_2\rag_\alpha b-\lag b,\ell_1\rag_\alpha a\right)\nonumber\\&\quad+\cos\lag\ell_1+\ell_2,x\rag_\alpha P_{\ell_1+\ell_2}\left(\lag a,\ell_2\rag_\alpha b+ \lag b,\ell_1\rag_\alpha a\right)\label{3.4}
\end{align}
for any $a\in \ell_1^{\bot_\alpha}$, and $b\in \ell_2^{\bot_\alpha}$. Summing \eqref{2.3} and \eqref{3.4}, we obtain
\begin{align}\label{3.5}\cos\lag\ell_1+\ell_2,x\rag_\alpha &P_{\ell_1+\ell_2}\left(\lag a,\ell_2\rag_\alpha b+\lag b,\ell_1\rag_\alpha a\right)= 
 Q(a\cos\lag \ell_1,x\rag_\alpha,b\sin\lag\ell_2,x\rag_\alpha)\nonumber\\&\quad\,\,\,+
Q(b\cos\lag\ell_2,x\rag_\alpha,a\sin\lag\ell_1,x\rag_\alpha).
\end{align}We take $a=\de$ and $b$ such that~$\lag b, \ell_1\rag_\alpha=1$. This choice is possible since $\ell_1\nparallel \ell_2$. Then \eqref{3.5} becomes 
$$
  \de\cos\lag\ell_1+\ell_2,x\rag_\alpha
=Q(\de\cos\lag\ell_1,x\rag_\alpha,b\sin\lag\ell_2,x\rag_\alpha)+Q(b\cos\lag\ell_2,x\rag_\alpha,\de\sin\lag\ell_1,x\rag_\alpha).
$$
This implies that     $\de\cos\lag \ell_1+\ell_2,x \rag_\alpha \in  \HH_{i+1}(\KK_{j-1})$. The second inclusion in \eqref{3.3} is proved in a similar way.

 {\it Step~2.2}. Now we prove~\eqref{3.2}. Let us take any    $r\in \KK$  such that the family  $\EE=\{\ell_1,\ell_2,r\}$  is  a  linearly independent. This   is possible since $\KK$ is a generator.  To simplify notation, let us denote
 $$
 (x,y,z)_\EE=x \ell_1+y \ell_2+z r
 $$ for any $x,y,z \in\R$.~Then  the family  $\{ (1,-1,1)_\EE,\,   (-1,1,1)_\EE,\, (1,1,-1)_\EE\}$ is linearly independent, so the intersection of the hyperplanes $(1,-1,1)_\EE^{\bot_\alpha}, $ $ (-1,1,1)_\EE^{\bot_\alpha}$, and~$(1,1,-1)_\EE^{\bot_\alpha}  $ is $\{0\}$. To fix the ideas, let us assume that        
   \begin{equation}\label{3.6}
(1,1,1)_\EE\notin  (1,1,-1)_\EE^{\bot_\alpha},
\end{equation}   the   cases  $(1,1,1)_\EE\notin  (-1,1,1)_\EE^{\bot_\alpha}$ and $(1,1,1)_\EE\notin  (1,-1,1)_\EE^{\bot_\alpha}$ are treated in a similar way. By \eqref{3.3}, we have
    \begin{equation}\label{3.7}
  \de(\ell_1,\ell_2)\cos\lag (1,1,0)_\EE, x\rag_\alpha, \,  \de(\ell_1,\ell_2)\sin\lag (1,1,0)_\EE, x\rag_\alpha\in \HH_{i+1}(\KK_{j-1}).
 \end{equation}  
Now writing  
$$
(1,1,1)_\EE=(1,1,0)_\EE+(0,0,1)_\EE,
$$   recalling that $(0,0,1)_\EE=r\in \KK$, and using \eqref{3.5} and \eqref{3.7}, we obtain 
\begin{align}\label{3.8} 
&\cos\lag (1,1,1)_\EE,x\rag_\alpha P_{(1,1,1)_\EE}\left(\lag\de(\ell_1,\ell_2), (0,0,1)_\EE\rag_\alpha b+\lag b,(1,1,0)_\EE\rag_\alpha\de(\ell_1,\ell_2)\right)\nonumber   \\&\quad=Q(\de(\ell_1,\ell_2)\cos\lag(1,1,0)_\EE,x\rag_\alpha,b\sin\lag (0,0,1)_\EE,x\rag_\alpha)\nonumber\\&\quad\quad +  
Q(b \cos \lag  (0,0,1)_\EE,x\rag_\alpha,\de(\ell_1,\ell_2)\sin\lag(1,1,0)_\EE,x\rag_\alpha)\in \HH_{i+2}(\KK_{j-1}) 
\end{align} for any $b\in (0,0,1)_\EE^{\bot_\alpha}$.~As   the family   $\EE$ is a   linearly independent, we have that~$\lag\de(\ell_1,\ell_2),(0,0,1)_\EE\rag_\alpha\neq0$.  Hence, 
 $$
 \GG=\{\lag\de(\ell_1,\ell_2),(0,0,1)_\EE\rag_\alpha b+\lag b,(1,1,0)_\EE\rag_\alpha\de (\ell_1,\ell_2):b\in(0,0,1)_\EE^{\bot_\alpha}\}
 $$ is a two-dimensional subspace of $\R^3$ contained in $(1,1,-1)_\EE^{\bot_\alpha}$, i.e.,    
$\GG=(1,1,-1)_\EE^{\bot_\alpha}.$ 
 Then \eqref{3.6} implies that  
       $P_{(1,1,1)_\EE} \GG=(1,1,1)_\EE^{\bot_\alpha}$. Combining this with \eqref{3.8}, we derive that     \begin{equation}\label{3.9}
  c_{\pm (1,1,1)_\EE}  \subset  \HH_{i+2}(\KK_{j-1}) .
   \end{equation}In a similar way, one proves that 
   \begin{equation}\label{3.10}
  s_{\pm (1,1,1)_\EE}  \subset  \HH_{i+2}(\KK_{j-1}) .
   \end{equation}
  Applying the result of Step~2.1 to the difference 
$$
(1,1,0)_\EE=(1,1,1)_\EE-(0,0,1)_\EE  
$$    and using \eqref{3.9} and \eqref{3.10}, we obtain 
$$
 \de ((1,1,1)_\EE,(0,0,1)_\EE)\cos\lag (1,1,0)_\EE, x\rag_\alpha\in  \HH_{i+3}(\KK_{j-1}).
$$ Combining this with the fact that  $ \de ((1,1,1)_\EE,(0,0,1)_\EE)\nparallel  \de ( \ell_1,\ell_2)$ and   \eqref{3.3}, we~obtain that~$
  c_{\pm(\ell_1+\ell_2)} \in \HH_{i+3}(\KK_{j-1}).
  $ The proof of  $
  s_{\pm(\ell_1+\ell_2)} \in \HH_{i+3}(\KK_{j-1})
  $ is similar. This   completes the proof of~\eqref{3.2}.

     \addcontentsline{toc}{section}{Bibliography}
 \bibliographystyle{alpha}
\def\cprime{$'$} \def\cprime{$'$}
  \def\polhk#1{\setbox0=\hbox{#1}{\ooalign{\hidewidth
  \lower1.5ex\hbox{`}\hidewidth\crcr\unhbox0}}}
  \def\polhk#1{\setbox0=\hbox{#1}{\ooalign{\hidewidth
  \lower1.5ex\hbox{`}\hidewidth\crcr\unhbox0}}}
  \def\polhk#1{\setbox0=\hbox{#1}{\ooalign{\hidewidth
  \lower1.5ex\hbox{`}\hidewidth\crcr\unhbox0}}} \def\cprime{$'$}
  \def\polhk#1{\setbox0=\hbox{#1}{\ooalign{\hidewidth
  \lower1.5ex\hbox{`}\hidewidth\crcr\unhbox0}}} \def\cprime{$'$}
  \def\cprime{$'$} \def\cprime{$'$} \def\cprime{$'$}

\end{document}